\newcommand{\bydef}{:=}
\newcommand{\vphi}{\varphi}
\newcommand{\veps}{\varepsilon}
\newcommand{\wh}[1]{\widehat{#1}}
\newcommand{\wt}[1]{\widetilde{#1}}
\newcommand{\wb}[1]{\overline{#1}}
\newcommand{\sym}{\mathcal{H}}
\newcommand{\sks}{\mathcal{K}}
\newcommand{\id}{\mathrm{id}}
\newcommand{\lspan}[1]{\mathrm{span}\left\{#1\right\}}
\newcommand{\matr}[1]{\left(\begin{smallmatrix}#1\end{smallmatrix}\right)}
\newcommand{\diag}{\mathrm{diag}}
\DeclareMathOperator*{\ot}{\otimes}
\newcommand{\tr}{\mathrm{tr}}
\DeclareMathOperator{\rank}{\mathrm{rank}} 
\newcommand{\bi}{\mathbf{i}}
\newcommand{\cA}{\mathcal{A}}
\newcommand{\cB}{\mathcal{B}}
\newcommand{\cC}{\mathcal{C}}
\newcommand{\cD}{\mathcal{D}}
\newcommand{\cH}{\mathcal{H}}
\newcommand{\cI}{\mathcal{I}}
\newcommand{\cJ}{\mathcal{J}}
\newcommand{\cK}{\mathcal{K}}
\newcommand{\cL}{\mathcal{L}}
\newcommand{\cO}{\mathcal{O}}
\newcommand{\cQ}{\mathcal{Q}}
\newcommand{\cU}{\mathcal{U}}
\newcommand{\cV}{\mathcal{V}}
\newcommand{\frs}{{\mathfrak s}}
\newcommand{\frn}{{\mathfrak n}}
\DeclareMathOperator{\CD}{\mathfrak{CD}}
\newcommand{\ZZ}{\mathbb{Z}}
\newcommand{\QQ}{\mathbb{Q}}
\newcommand{\FF}{\mathbb{F}}
\newcommand{\chr}[1]{\mathrm{char}\,#1}
\DeclareMathOperator{\End}{\mathrm{End}}
\DeclareMathOperator{\im}{\mathrm{im}\,}
\DeclareMathOperator{\Aut}{\mathrm{Aut}}
\DeclareMathOperator{\Der}{\mathrm{Der}}
\DeclareMathOperator{\inder}{\mathrm{IDer}}
\DeclareMathOperator{\supp}{\mathrm{Supp}\,}
\newcommand{\Jord}[1]{{#1}^{(+)}}
\newcommand{\Ad}{\mathrm{Ad}}
\newcommand{\Gl}{\mathfrak{gl}}
\newcommand{\frso}{{\mathfrak{so}}}
\newcommand{\GL}{\mathrm{GL}}
\newcommand{\SL}{\mathrm{SL}}
\newcommand{\subo}{_{\bar 0}}
\newcommand{\subuno}{_{\bar 1}}
\newcommand{\alb}{\mathbb{A}} 
\newcommand{\HQ}{{\sym}_4(\cQ)}
\newcommand{\HK}{{\sym}_4(\cK)}
\newcommand{\zero}{{\bar{0}}}
\newcommand{\one}{{\bar{1}}}
\newcommand{\two}{{\bar{2}}}
\newcommand{\three}{{\bar{3}}}
\newcommand{\pf}{\mathrm{pf}}
\newcommand{\invol}{\,\bar{ }\,} 
\newcommand{\str}{\mathfrak{str}}
\newcommand{\stu}{\mathfrak{stu}}
\newcommand{\kan}{\mathfrak{kan}}
\newcommand{\cX}{\mathcal{X}}
\newtheorem{theorem}{Theorem}
\newtheorem{proposition}[theorem]{Proposition}
\newtheorem{lemma}[theorem]{Lemma}
\newtheorem{corollary}[theorem]{Corollary}
\theoremstyle{definition}
\newtheorem{df}[theorem]{Definition}
\theoremstyle{remark}
\newtheorem{remark}[theorem]{Remark}
\begin{document}

\title[A $\ZZ_4^3$-grading on a $56$-dimensional simple structurable algebra]{A $\ZZ_4^3$-grading on a $56$-dimensional\\ simple structurable algebra\\ and related fine gradings on the\\ simple Lie algebras of type $E$}

\author[D. Aranda]{Diego Aranda${}^\star$}
\address{Departamento de Matem\'{a}ticas
 e Instituto Universitario de Matem\'aticas y Aplicaciones,
 Universidad de Zaragoza, 50009 Zaragoza, Spain}
\email{daranda@unizar.es}

\author[A. Elduque]{Alberto Elduque${}^\star$}
\address{Departamento de Matem\'{a}ticas
 e Instituto Universitario de Matem\'aticas y Aplicaciones,
 Universidad de Zaragoza, 50009 Zaragoza, Spain}
\email{elduque@unizar.es}
\thanks{${}^\star$supported by the Spanish Ministerio de Econom\'{\i}a y Competitividad---Fondo Europeo de Desarrollo Regional (FEDER) MTM2010-18370-C04-02 and of the Diputaci\'on General de Arag\'on---Fondo Social Europeo (Grupo de Investigaci\'on de \'Algebra)}

\author[M. Kochetov]{Mikhail Kochetov${}^\dagger$}
\address{Department of Mathematics and Statistics,
 Memorial University of Newfoundland,
 St. John's, NL, A1C5S7, Canada}
\email{mikhail@mun.ca}
\thanks{${}^\dagger$partially supported by a sabbatical research grant of Memorial University and a grant for visiting scientists by Instituto Universitario de Matem\'aticas y Aplicaciones, University of Zaragoza}

\subjclass[2010]{Primary 17B70; Secondary 17B25, 17C40, 17A30}

\keywords{Graded algebra, structurable algebra, exceptional simple Lie algebra}

\date{}

\begin{abstract}
We describe two constructions of a certain $\ZZ_4^3$-grading on the so-called Brown algebra (a simple structurable algebra of dimension $56$ and skew-dimension $1$) over an algebraically closed field of characteristic different from $2$ and $3$.
We also show how this grading gives rise to several interesting fine gradings on exceptional simple Lie algebras of types $E_6$, $E_7$ and $E_8$.
\end{abstract}

\maketitle

\section{Introduction}

In the past two decades, there has been much progress in the study of gradings on simple Lie algebras by arbitrary groups --- see the recent monograph \cite{EKmon} and references therein.
In particular, over an algebraically closed field of characteristic $0$, fine gradings have been classified for all finite-dimensional simple Lie algebras except $E_7$ and $E_8$.

The second author has shown in \cite{E13} that, in a sense, such a fine grading splits into two independent gradings: a grading by a free abelian group, which is also a grading by a root system, and a fine grading by a finite group on the corresponding coordinate algebra. For
example, the $\ZZ_2^3$-grading on the algebra of octonions that arises from the three iterations of the Cayley--Dickson doubling process is ``responsible'' not only for a fine $\ZZ_2^3$-grading on $G_2$ but also for fine gradings on $F_4$ by the group $\ZZ\times\ZZ_2^3$ and on $E_r$ by $\ZZ^{r-4}\times\ZZ_2^3$ ($r=6,7,8$). We have a similar picture for the $\ZZ_3^3$-grading on the simple exceptional Jordan algebra (the Albert algebra) that can be obtained from the first Tits construction.

In the classification \cite{DV_e6} of fine gradings on the simple Lie algebra of type $E_6$ over an algebraically closed field of characteristic $0$, among the $14$ fine gradings there is one with universal grading group $\ZZ_4^3$, which has an interesting property: the corresponding quasitorus in the automorphism group of $E_6$ contains an outer automorphism of order $4$ but not of order $2$. A model for this grading in terms of a symplectic triple system is given in \cite[\S 6.4]{EKmon}. On the other hand, it is known that $E_6$ can be realized as the derivation algebra of a certain simple nonassociative algebra with involution $\cA$, where the dimension of $\cA$ is $56$ and the dimension of the space of skew elements of $\cA$ is $1$. This algebra with involution belongs to the class of so-called {\em structurable algebras}, which were introduced by Allison in \cite{A78} as a generalization of Jordan algebras. (Jordan algebras are the structurable algebras whose involution is the identity map.) In fact, the algebra $\cA$ itself goes back to Brown \cite{B63} and for this reason is called the {\em split Brown algebra} in \cite{G01}.

In Draper's ongoing work on fine gradings on the simple Lie algebras of type $E_7$ and $E_8$, there appeared a fine grading on $E_7$ with universal goup $\ZZ_2\times\ZZ_4^3$ and two fine gradings on $E_8$ with universal groups $\ZZ_2^2\times\ZZ_4^3$ and  $\ZZ\times\ZZ_4^3$. According to \cite{E13}, this latter must necessarily be induced by a fine grading with universal group $\ZZ_4^3$ on the Brown algebra. This was the starting point of our investigation.

In this article we construct a $\ZZ_4^3$-grading on $\cA$ in two ways: realizing $\cA$ as the Cayley--Dickson double, in the sense of \cite{AF84}, of the quartic Jordan algebra $\HQ$ (the Hermitian matrices of order $4$ over quaternions) or the structurable matrix algebra, in the sense of \cite{AF84}, of the cubic Jordan algebra $\alb=\sym_3(\cC)$ (the Hermitian matrices of order $3$ over octonions). These constructions actually work over any field containing a fourth root of $1$.

The background on gradings and on structurable algebras (in particular, the split Brown algebra $\cA$) will be recalled in Section \ref{s:preliminaries}, and the two constructions of the $\ZZ_4^3$-grading on $\cA$ will be carried out in Sections \ref{s:H4} and \ref{s:MatrixStructAlgebra}. In Section \ref{s:recognition}, we will establish a ``recognition theorem'' for this grading, which in particular implies that our two models are equivalent. Finally, in Section \ref{s:E}, we will explain how this grading can be used to construct the fine gradings on $E_6$, $E_7$ and $E_8$ mentioned above.

\section{Preliminaries}\label{s:preliminaries}

\subsection{Group gradings on algebras}

Let $\cU$ be an algebra (not necessarily associative) over a field $\FF$ and let $G$ be a group (written multiplicatively).

\begin{df}\label{df:G_graded_alg}
A {\em $G$-grading} on $\cU$ is a vector space decomposition
\[
\Gamma:\;\cU=\bigoplus_{g\in G} \cU_g
\]
such that
$
\cU_g \cU_h\subset \cU_{gh}\quad\mbox{for all}\quad g,h\in G.
$
If such a decomposition is fixed, $\cU$ is referred to as a {\em $G$-graded algebra}.
The nonzero elements $x\in\cU_g$ are said to be {\em homogeneous of degree $g$}, and one writes $\deg_\Gamma x=g$ or just $\deg x=g$ if the grading is clear from the context. The {\em support} of $\Gamma$ is the set $\supp\Gamma\bydef\{g\in G\;|\;\cU_g\neq 0\}$.
\end{df}

If $(\cU,\sigma)$ is an algebra with involution, then we will always assume $\sigma(\cU_g)=\cU_g$ for all $g\in G$.

There is a more general concept of grading: a decomposition $\Gamma:\;\cU=\bigoplus_{s\in S}\cU_s$ into nonzero subspaces indexed by a set $S$ and having the property that, for any $s_1,s_2\in S$ with $\cU_{s_1}\cU_{s_2}\ne 0$, there exists (unique) $s_3\in S$ such that $\cU_{s_1}\cU_{s_2}\subset\cU_{s_3}$. For such a decomposition $\Gamma$, there may or may not exist a group $G$ containing $S$ that makes $\Gamma$ a $G$-grading. If such a group exists, $\Gamma$ is said to be a {\em group grading}. However, $G$ is usually not unique even if we require that it should be generated by $S$. The {\em universal grading group} is generated by $S$ and has the defining relations $s_1s_2=s_3$ for all $s_1,s_2,s_3\in S$ such that $0\ne\cU_{s_1}\cU_{s_2}\subset\cU_{s_3}$ (see \cite[Chapter 1]{EKmon} for details).

It is known that if $\Gamma$ is a group grading on a simple Lie algebra, then $\supp\Gamma$ always generates an abelian subgroup. In other words, the universal grading group is abelian. Here we will deal exclusively with abelian groups, and we will sometimes write them additively. Gradings by abelian groups often arise as eigenspace decompositions with respect to a family of commuting diagonalizable automorphisms. If $\FF$ is algebraically closed and $\chr\FF=0$ then all abelian group gradings on finite-dimensional algebras can be obtained in this way.

Let $\Gamma:\, \cU=\bigoplus_{g\in G} \cU_g$ and $\Gamma':\,\cV=\bigoplus_{h\in H} \cV_h$
be two gradings, with supports $S$ and $T$, respectively.
We say that $\Gamma$ and $\Gamma'$ are {\em equivalent} if there exists an isomorphism of algebras $\vphi\colon\cU\to\cV$ and a bijection $\alpha\colon S\to T$ such that $\varphi(\cU_s)=\cV_{\alpha(s)}$ for all $s\in S$. If $G$ and $H$ are universal grading groups then $\alpha$ extends to an isomorphism $G\to H$.

If $\Gamma:\,\cU=\bigoplus_{g\in G} \cU_g$ and $\Gamma':\,\cU=\bigoplus_{h\in H} \cU'_h$ are two gradings on the same algebra, with supports $S$ and $T$, respectively, then we will say that $\Gamma'$ is a {\em refinement} of $\Gamma$ (or $\Gamma$ is a {\em coarsening} of $\Gamma'$) if for any $t\in T$ there exists (unique) $s\in S$ such that $\cU'_t\subset\cU_s$. If, moreover, $\cU'_t\ne\cU_s$ for at least one $t\in T$, then the refinement is said to be {\em proper}. Finally, $\Gamma$ is said to be {\em fine} if it does not admit any proper refinements.

\subsection{Structurable algebras}

Let $(\cA,\invol)$ be an algebra with involution over a field $\FF$, i.e., $a\mapsto\bar{a}$ is an $\FF$-linear involutive antiautomorphism of $\cA$. We will use the notation
\[
\sym(\cA,\invol)=\{a\in\cA\;|\;\bar{a}=a\}\;\mbox{and}\;\sks(\cA,\invol)=\{a\in\cA\;|\;\bar{a}=-a\}.
\]
If $\chr\FF\ne 2$ then $\cA=\sym(\cA,\invol)\oplus\sks(\cA,\invol)$. The dimension of the subspace $\sks(\cA,\invol)$ will be referred to as the {\em skew-dimension} of $(\cA,\invol)$.

\begin{df}
Suppose $\chr\FF\ne 2,3$. An $\FF$-algebra with involution $(\cA,\invol)$ is said to be {\it structurable} if
\begin{equation}\label{struc}
[V_{x,y},V_{z,w}] = V_{V_{x,y}z,w}-V_{z,V_{y,x}w}\quad\mbox{for all}\;x,y,z\in\cA,
\end{equation}
where $ V_{x,y}(z) = \{x,y,z\}:= (x\bar y)z + (z\bar y)x - (z\bar x)y $.
\end{df}

We will always assume that $\cA$ is {\em unital}. In the case $\chr\FF\ne 2,3$, it is shown in \cite{A78} that identity $\eqref{struc}$ implies that $(\cA,\invol)$ is {\em skew-alternative}, i.e.,
\[
(z-\bar{z},x,y)=-(x,z-\bar{z},y)=(x,y,z-\bar{z})\quad\mbox{for all}\;x,y,z\in\cA,
\]
where $(a,b,c)\bydef(ab)c-a(bc)$. In the case $\chr\FF=2$ or $3$, skew-alternativity is taken as an additional axiom.

Denote by $Z(\cA)$ the associative center of $\cA$ (i.e., the set of elements $z\in\cA$ satisfying $xz=zx$ and $(z,x,y)=(x,z,y)=(x,y,z)=0$ for all $x,y\in\cA$). The {\it center} of $(\cA,\invol)$ is defined by $Z(\cA,\invol)=Z(\cA)\cap\sym(\cA,\invol)$. A (unital) structurable algebra $\cA$ is said to be {\it central} if $Z(\cA,\invol)=\FF1$.

\begin{theorem}[Allison, Smirnov] If $\chr\FF \neq 2, 3, 5$, then any central simple structurable $\FF$-algebra belongs to one of the of the following six (non-disjoint) classes:

\begin{itemize}
\item[(1)] central simple associative algebras with involution,
\item[(2)] central simple Jordan algebras (with identity involution),
\item[(3)] structurable algebras constructed from a non-degenerate Hermitian form
over a central simple associative algebra with involution,
\item[(4)] forms of the tensor product of two composition algebras,
\item[(5)] simple structurable algebras of skew-dimension 1 (forms of structurable matrix algebras),
\item[(6)] an exceptional 35-dimensional case (Kantor-Smirnov algebra), which can be constructed from an
octonion algebra.
\end{itemize}
\end{theorem}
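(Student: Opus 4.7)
The plan is to follow the strategy of Allison \cite{A78} in characteristic $0$, extended by Smirnov to characteristic $\neq 2,3,5$, which reduces the classification to that of simple $\ZZ$-graded Lie algebras via the Kantor construction. To a unital structurable algebra $(\cA,\invol)$ one attaches a $5$-graded Lie algebra $\kan(\cA,\invol) = \cA_{-2} \oplus \cA_{-1} \oplus \cA_0 \oplus \cA_1 \oplus \cA_2$ in which $\cA_{\pm 2} \cong \sks(\cA,\invol)$, $\cA_{\pm 1} \cong \cA$, and $\cA_0$ is the inner structure algebra spanned by the operators $V_{x,y}$, with the bracket forced by identity~$\eqref{struc}$. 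Centrality and simplicity of $(\cA,\invol)$ correspond to centrality and simplicity of $\kan(\cA,\invol)$, and $\cA$ together with its involution and product is recovered from the graded pieces $\cA_1,\cA_2$ and the action of $\cA_0$.

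The first step is to split the problem according to the skew-dimension $d := \dim_\FF \sks(\cA,\invol)$. When $d=0$ the involution is trivial, so $\cA$ is commutative, and skew-alternativity forces $\cA$ to be a Jordan algebra; the Jacobson--Zelmanov classification then yields case~(2). When $d=1$ one invokes the Allison--Faulkner theorem identifying $\cA$ with a \emph{structurable matrix algebra} built from an admissible cubic form on a Jordan algebra, giving case~(5). When $d\geq 2$, the skew elements generate a richer inner structure, and the arguments of \cite{A78} together with Smirnov's refinements reduce the problem to studying simple $\ZZ$-graded Lie algebras of length~$5$ whose outermost homogeneous components have dimension $d$.

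Running through the simple $\ZZ$-graded Lie algebras of classical type yields case~(1) (associative with involution), case~(3) (hermitian form construction over such an associative algebra), and case~(4) (forms of a tensor product of two composition algebras), in each case by recognising the product on $\cA_1$ from the bracket $[\cA_{-1},\cA_1]\subset\cA_0$ and the induced action. On the exceptional side, the constraint that $\dim\cA_2 = d$ together with the module structure of $\cA_{\pm 1}$ over $\cA_0$ severely restricts the admissible $5$-gradings, and all but one of them fail to produce a central simple structurable algebra; the surviving instance is the $35$-dimensional Kantor--Smirnov algebra of case~(6), constructed from an octonion algebra and sitting inside an exceptional Lie algebra.

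The main obstacle is the exceptional case: one must verify that \emph{no} central simple structurable algebra beyond case~(6) produces an exceptional $\kan(\cA,\invol)$, and that case~(6) itself is indeed central and simple. The hypothesis $\chr\FF\neq 5$ enters precisely at this point through Smirnov's quintic trace identities, which are used to pin down the embedding of the inner structure algebra inside $\End(\cA)$; establishing that these identities remain non-degenerate in the allowed characteristics, together with the explicit construction of case~(6) from octonions and the verification that the six classes (which are explicitly not required to be disjoint) are exhaustive, is the most delicate technical point of the proof.
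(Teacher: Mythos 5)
The paper does not actually prove this theorem; it is quoted as a background classification result and attributed to Allison \cite{A78} (characteristic $0$) and Smirnov \cite{Smi92} (the extension to $\chr\FF\neq 2,3,5$ and the discovery of case (6)). There is no internal proof to compare your attempt against, only the citation.

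Taken on its own terms as a sketch of the literature's argument, your outline has the right flavour but contains genuine gaps at exactly the points that would need work. The decomposition by skew-dimension $d$ does not align with the six classes as you claim: classes (1), (3) and (4) arise across essentially all skew-dimensions (already the transpose involution on $M_n(\FF)$ gives $d=n(n-1)/2$), and the tensor product of two composition algebras can overlap with the associative case, so you cannot sweep everything with $d\geq 2$ into one uniform analysis of exceptional $5$-gradings. When $d=0$, skew-alternativity is vacuous (there are simply no skew elements), so it cannot be what ``forces $\cA$ to be a Jordan algebra''; what happens is that identity~\eqref{struc}, with trivial involution, becomes (after showing commutativity) the linearized Jordan identity, and that derivation would need to be made explicit. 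And the assertion that $\chr\FF\neq 5$ enters via ``Smirnov's quintic trace identities'' appears to be invented to fill a gap; you should locate the actual point in \cite{Smi92} where characteristic $5$ fails rather than name an identity that is not in the record. As it stands the proposal is a plan that delegates the hard work (the reduction, the recognition of each class from the graded Lie algebra, the characteristic bookkeeping, and the uniqueness of case~(6)) back to the references it is nominally reconstructing.
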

The classification was given by Allison in the case of characteristic $0$ (see \cite{A78}), but case (6) was overlooked. Later, Smirnov completed the classification and gave the generalization for $\chr\FF\neq 2,3,5$ (see \cite{Smi92}).

\subsection{Structurable matrix algebras}

Assume $\chr\FF\ne 2,3$. Let $J$ and $J'$ be vector spaces over $\FF$ and consider a triple $(T,N,N')$ where $N$ and $N'$ are symmetric trilinear forms on $J$ and $J'$, respectively, and $T\colon J\times J'\rightarrow\FF$ is a nondegenerate bilinear form. For any $x,y\in J$, $x',y'\in J'$, define $x\times y\in J'$ and $x'\times y'\in J$ by
\begin{equation*}
T(z,x\times y)=N(x,y,z) \quad \text{and} \quad T(x'\times y',z')=N'(x',y',z')
\end{equation*}
for all $z\in J$, $z'\in J'$. For any $x\in J$ and $x'\in J'$, define $N(x)=\frac16N(x,x,x)$, $N'(x')=\frac16N'(x',x',x')$, $x^\#=\frac12\,x\times x$ and $x'^\#=\frac12\,x'\times x'$. If the triple $(T,N,N')$ satisfies the identities
\begin{equation*}
(x^\#)^\#=N(x)x \quad \text{and} \quad (x'^\#)^\#=N'(x')x'
\end{equation*}
for all $x\in J$, $x'\in J'$, then the algebra
\[ \cA=\left\{\begin{pmatrix}\alpha & x \\ x' & \beta\end{pmatrix}\;|\;\alpha,\beta\in\FF, x\in J, x'\in J' \right\}, \]
with multiplication
\begin{equation} \label{matrstructproduct}
\begin{pmatrix} \alpha & x \\ x' & \beta \end{pmatrix}
\begin{pmatrix} \gamma & y \\ y' & \delta \end{pmatrix} =
\begin{pmatrix} \alpha\gamma+T(x,y') & \alpha y+\delta x+x'\times y' \\
		\gamma x'+\beta y'+x\times y & T(y,x')+\beta\delta \end{pmatrix},
\end{equation}
and involution
\begin{equation} \label{matrstructinvol}
\begin{pmatrix} \alpha & x \\ x' & \beta \end{pmatrix} \stackrel{-}{\mapsto}
\begin{pmatrix} \beta & x \\ x' & \alpha \end{pmatrix},
\end{equation}
is a central simple structurable algebra of skew-dimension $1$, where the space of skew elements is spanned by $s_0=\matr{1&0\\0&-1}$. These are called {\it structurable matrix algebras} in \cite{AF84}, where it is shown (see Proposition 4.5) that, conversely, if $(\cA,\invol)$ is a simple structurable algebra with $\sks(\cA,\invol)=\FF s_0\ne 0$, then $s_0^2=\mu 1$ with $\mu\in\FF^\times$, and $(\cA,\invol)$ is isomorphic to a structurable matrix algebra if and only if $\mu$ is a square in $\FF$.

The triples $(T,N,N')$, as above, that satisfy $N\ne 0$ (equivalently, $N'\ne 0$) are called {\em admissible triples} in \cite{A78}, where it is noted that the corresponding structurable algebras possess a nondegenerate symmetric bilinear form
\begin{equation}\label{df:trace_form_A}
\langle a,b\rangle=\tr(a\bar{b}),\quad\mbox{where}\quad\tr\begin{pmatrix} \alpha & x \\ x' & \beta \end{pmatrix}\bydef\alpha+\beta,
\end{equation}
which is {\em invariant} in the sense that $\langle \bar{a},\bar{b}\rangle=\langle a,b\rangle$ and $\langle ca,b\rangle=\langle a,\bar{c}b\rangle$ for all $a,b,c$.
The main source of admissible triples are Jordan algebras: if $J$ is a separable Jordan algebra of degree $3$ with generic norm $N$ and generic trace $T$, then $(\zeta T,\zeta N,\zeta^2 N)$ is an admissible triple (with $J'=J$) for any nonzero $\zeta\in\FF$.
Note that the map $x\mapsto \lambda x$ and $x'\mapsto\lambda^2 x'$ is an isomorphism from $(\lambda^3 T,\lambda^3 N,\lambda^6 N)$ to $(T,N,N)$, so over algebraically closed fields, we can get rid of $\zeta$.

\subsection{Cayley--Dickson doubling process for algebras with involution}

Let $(\cB,\invol)$ be a unital $\FF$-algebra with involution,  $\chr\FF\ne 2$, and let $\phi\colon\cB\times\cB\rightarrow\FF$ be a symmetric bilinear form such that $\phi(1,1)\neq 0$ and $\phi(b,1)=\phi(\bar b,1)$ for all $b\in\cB$. Denote $\phi(b)=\phi(b,1)$ and define $\theta\colon\cB\rightarrow\cB$ by
\begin{equation*}
b^\theta = -b + \frac{2\phi(b)}{\phi(1)}1.
\end{equation*}
Then $\theta$ is a linear map that commutes with the involution and satisfies $\theta^2=\id$ and $\phi(b_1^\theta,b_2^\theta)=\phi(b_1,b_2)$ for all $b_1,b_2\in\cB$. Given $0\neq\mu\in\FF$, define a new algebra with involution $\CD(\cB,\mu):=\cB\oplus\cB$ where multiplication is given by
\begin{equation}\label{df:mult_of_A}
(b_1,b_2)(c_1,c_2)=(b_1c_1+\mu(b_2c_2^\theta)^\theta, b_1^\theta c_2+(b_2^\theta c_1^\theta)^\theta)
\end{equation}
and involution is given by
\begin{equation*}
\overline{(b_1,b_2)} = (\bar b_1, -(\bar{b}_2)^\theta).
\end{equation*}
Note that $b\in\cB$ can be identified with $(b,0)$, that $(0,b)=vb$ for $v\bydef(0,1)$, and $v^2=\mu 1$. Thus $(b_1,b_2)=b_1+vb_2$ and $\CD(\cB,\mu)=\cB\oplus v\cB$. Moreover, the symmetric bilinear form $\phi$ can be extended to $\cB\oplus v\cB$ by setting  $\phi(b_1+vb_2,c_1+vc_2)=\phi(b_1,c_1)-\mu\phi(b_2,c_2)$; the extended $\phi$ satisfies $\phi(1,1)\neq 0$ and $\phi(a,1)=\phi(\bar{a},1)$ for all $a\in\CD(\cB,\mu)$.

This construction was introduced in \cite{AF84} and called the {\em (generalized) Cayley--Dickson process} because it reduces to the classical doubling process for a Hurwitz algebra $\cB$ if $\phi$ is the polar form of the norm and hence $b^\theta=\bar{b}$ for all $b\in\cB$.

It is shown in \cite{AF84} assuming $\chr\FF\ne 2,3$ (see Theorem 6.6, where a slightly more general situation is considered) that if $\cB$ is a separable Jordan algebra of degree $4$, the involution is trivial and $\phi$ is the generic trace form, then $\CD(\cB,\mu)$ is a simple structurable algebra of skew-dimension $1$. In fact, if $\mu$ is a square in $\FF$ then such $\CD(\cB,\mu)$ is isomorphic to the structurable matrix algebra corresponding to a certain admissible triple defined on the space $\cB_0\subset\cB$ of elements with generic trace $0$ (Proposition 6.5).

So let $\cB$ be a separable Jordan algebra of degree $4$ and let $\cA=\CD(\cB,\mu)$ as above. We state some basic properties of $\cA$ for future use: $\cB$ is a subalgebra of $\cA$, there is an element $v\in\cB$ such that $\cA=\cB\oplus v\cB$, and the involution of $\cA$ is given by $\overline{a+vb}=a-vb^\theta$ where $\theta\colon\cB\to\cB$ is a linear map defined by $1^\theta=1$ and $b^\theta=-b$ for all $b\in\cB_0$. The operators $L_v$ and $R_v$ of left and right multiplication by $v$, respectively, satisfy the relations $L_v^2=R_v^2=\mu\id$ and $L_v R_v=R_v L_v=\mu\theta$ where we extended $\theta$ to an operator on $\cA$ by the rule $(a+vb)^\theta=a^\theta+vb^\theta$. The multiplication of $\cA$ is determined by the formulas
\begin{equation}\label{eq:mult_of_A}
a(vb)=v(a^\theta b),\;(va)b=v(a^\theta b^\theta)^\theta,\;(va)(vb)=\mu(ab^\theta)^\theta,
\end{equation}
for all $a,b\in\cA$. (This is equivalent to \eqref{df:mult_of_A} if $a,b\in\cB$, but a straightworward computation shows that the formulas continue to hold if we allow $a$ and $b$ to range over $\cA$.)

Since $\sks(\cA,\invol)=\FF v$ and $v^2=\mu 1$, all automorphisms of $(\cA,\invol)$ send $v$ to $\pm v$ and all derivations of $(\cA,\invol)$ annihilate $v$. Every automorphism (or derivation) $\vphi$ of $\cB$ extends to $\cA$ in the natural way: $a+vb\mapsto \vphi(a)+v\vphi(b)$. We will denote this extended map by the same symbol. Similarly, any $G$-grading $\cB=\bigoplus_{g\in G}\cB_g$ gives rise to a $G$-grading on $\cA$, namely, $\cA=\bigoplus_{g\in G}(\cB_g\oplus v\cB_g)$.

\subsection{Brown algebras via Cayley--Dickson process}\label{ss:Brown_via_CD}

Assume $\chr\FF\ne 2$. The split Brown algebra mentioned in the introduction can be obtained as the Cayley--Dickson double of two different separable Jordan algebras of degree $4$. We will consider a more general situation.

Let $\cQ$ be a quaternion algebra over $\FF$ with its standard involution, $q\mapsto\bar{q}$. The algebra $M_4(\cQ)$ is associative and has a natural involution $(q_{ij})^*=(\wb{q}_{ji})$, so $\HQ\bydef\{x\in M_4(\cQ)\;|\;x^*=x\}$ is a Jordan algebra with respect to the symmetrized product $(x,y)\mapsto\frac12(xy+yx)$. This is a simple Jordan algebra of degree $4$ and dimension $28$, so $\CD(\HQ,\mu)$ is a structurable algebra of dimension $56$, for any $\mu\in\FF^\times$.

\begin{remark}
If $\chr\FF=3$, we cannot apply the results in \cite{AF84} directly, but $\cQ$ can be obtained by ``extension of scalars'' from the ``generic'' quaternion algebra $\tilde\cQ$ over the polynomial ring $\ZZ[X,Y]$, hence $\HQ$ can be obtained from the Jordan algebra $\sym_4(\tilde\cQ)$ over $\ZZ[\frac12][X,Y]$, and $\CD(\HQ,\mu)$ can be obtained from the algebra $\CD(\sym_4(\tilde{\cQ}),Z)$ over $\ZZ[\frac12][X,Y,Z]$, which satisfies the required identities because it is a subring with involution in a structurable algebra over the field $\QQ(X,Y,Z)$.
\end{remark}

Let $\cC$ be an octonion algebra over $\FF$. As in the case of quaternions, the standard involution of $\cC$ yields an involution on the (nonassociative) algebra $M_3(\cC)$. It is well known that $\alb=\sym_3(\cC)$ (Albert algebra) is an exceptional Jordan algebra of dimension $27$. We will use the standard notation:
\[
\alb=\FF E_1\oplus\FF E_2\oplus\FF E_3\oplus \iota_1(\cC)\oplus\iota_2(\cC)\oplus\iota_3(\cC),
\]
where
\[
\begin{aligned}
E_1&=\begin{pmatrix}1&0&0\\ 0&0&0\\ 0&0&0\end{pmatrix}, &
E_2&=\begin{pmatrix}0&0&0\\ 0&1&0\\ 0&0&0\end{pmatrix}, &
E_3&=\begin{pmatrix}0&0&0\\ 0&0&0\\ 0&0&1\end{pmatrix}, \\
\iota_1(a)&=2\begin{pmatrix}0&0&0\\ 0&0&\bar a\\ 0&a&0\end{pmatrix},\quad &
\iota_2(a)&=2\begin{pmatrix}0&0&a\\ 0&0&0\\ \bar a&0&0\end{pmatrix},\quad &
\iota_3(a)&=2\begin{pmatrix}0&\bar a&0\\a&0&0\\ 0&0&0\end{pmatrix},\quad
\end{aligned}
\]
for any $a\in\cC$. Then $E_i$ are orthogonal idempotents with $E_1+E_2+E_3=1$, and the remaining products are as follows:
\begin{equation*}
\begin{split}
&E_i\iota_i(a)=0,\quad E_{i+1}\iota_i(a)=\frac{1}{2}\iota_i(a)=E_{i+2}\iota_i(a),\\
&\iota_i(a)\iota_{i+1}(b)=\iota_{i+2}(\bar a\bar b),\quad
\iota_i(a)\iota_i(b)=2n(a,b)(E_{i+1}+E_{i+2}),
\end{split}
\end{equation*}
for all $a,b\in \cC$, with $i=1,2,3$ taken modulo $3$. (This convention about indices will be used without further mention.)

The Jordan algebra $\alb$ is simple. Any element $x\in\alb$ satisfies the generic degree $3$ equation
$
x^3-T(x)x^2+S(x)x-N(x)1=0,
$
for the linear form $T$ (the generic trace), the quadratic form $S$, and the cubic form $N$ (the generic norm) given by:
\begin{equation*}
\begin{split}
T(x)&=\alpha_1+\alpha_2+\alpha_3,\\
S(x)&=\frac{1}{2}\bigl(T(x)^2-T(x^2)\bigr)=\sum_{i=1}^3 \bigl(\alpha_{i+1}\alpha_{i+2}-4n(a_i)\bigr),\\
N(x)&=\alpha_1\alpha_2\alpha_3+8n(a_1,\bar a_2\bar a_3)-4\sum_{i=1}^3\alpha_in(a_i),
\end{split}
\end{equation*}
for $x=\sum_{i=1}^3\bigl(\alpha_iE_i+\iota_i(a_i)\bigr)$, where $n$ is the norm of $\cC$.

Hence $\alb\times\FF$ is a separable Jordan algebra of degree $4$ and dimension $28$, so $\CD(\alb\times\FF,\mu)$ is a structurable algebra of dimension $56$, for any $\mu\in\FF^\times$.

The connection between the above two Cayley--Dickson doubles is the following: if $\cC=\CD(\cQ,\mu)$, then $\CD(\HQ,\mu)$ is isomorphic to $\CD(\alb\times\FF,\mu)$. Indeed, we have $\CD(\HQ,\mu)=\HQ\oplus v\HQ$ and $\CD(\alb\times\FF,\mu)=(\alb\times\FF)\oplus v'(\alb\times\FF)$ with $v^2=\mu 1=v'^2$. For any $a\in\cQ$, define the elements of $\HQ$:
\begin{equation*}
\iota_1'(a)= \begin{pmatrix} 0&0&0&2a\\0&0&0&0\\0&0&0&0\\2\bar a&0&0&0 \end{pmatrix}, \;
\iota_2'(a)= \begin{pmatrix} 0&0&0&0\\0&0&0&2a\\0&0&0&0\\0&2\bar a&0&0 \end{pmatrix}, \;
\iota_3'(a)= \begin{pmatrix} 0&0&0&0\\0&0&0&0\\0&0&0&2a\\0&0&2\bar a&0 \end{pmatrix}.
\end{equation*}
Then we have a $\ZZ_2$-grading on $\HQ$ given by $\HQ_{\bar0}=\diag(\sym_3(\cQ),\FF)$ and $\HQ_{\bar1}=\bigoplus_{j=1}^3\iota'_j(\cQ)$. The automorphism of order 2 producing this grading can be extended to an automorphism  of $\cA=\CD(\HQ,\mu)$ sending $v$ to $-v$, which also has order $2$ and will be denoted by $\Upsilon$. The fixed subalgebra of $\Upsilon$ is $\cB=\diag(\sym_3(\cQ),\FF) \oplus \bigoplus_{j=1}^3 v\iota'_j(\cQ)$. The involution is trivial on $\cB$, so it is a Jordan algebra. Since $L_v$ is an invertible operator, the $\ZZ_2$-grading produced by $\Upsilon$ is $\cA=\cB\oplus v\cB$. Write $\cC=\cQ\oplus u\cQ$ with $u^2=\mu 1$. Then it is straightforward to verify that the mapping $\varphi_{\CD}\colon\cB \rightarrow \alb\times\FF$ defined by  $\diag(x,\lambda)\mapsto(x,\lambda)$, for $x\in\sym_3(\cQ)$, $\lambda\in\FF$, and $v\iota'_j(a) \mapsto (\iota_j(ua),0)$, for $a\in\cQ$, is an isomorphism of algebras. Moreover, we have $\vphi_{\CD}(b^\theta)=\vphi_{\CD}(b)^\theta$ for all $b\in\cB$, so identities \eqref{eq:mult_of_A} for the algebra $\cA$ imply that $\vphi_{\CD}$ can be extended to an isomorphism $\varphi_{\CD}\colon\CD(\HQ,\mu) \rightarrow \CD(\alb\times\FF,\mu)$ sending $v$ to $v'$.

\begin{df}\label{df:Brown}
Let $\cQ$ be a quaternion algebra over $\FF$ and let $\cC=\CD(\cQ,1)$, so $\cC$ is the split octonion algebra and $\alb=\sym_3(\cC)$ is the split Albert algebra. Then the structurable algebra $\CD(\sym_4(\cQ),1)\cong\CD(\alb\times\FF,1)$ will be referred to as the {\em split Brown algebra}.
\end{df}

\subsection{Brown algebras as structurable matrix algebras}\label{ss:Brown_via_matr}

It is shown in \cite{AF84}, assuming $\chr\FF\ne 2,3$, that the admissible triple $(T,N,N)$ arising from a separable Jordan algebra $\cJ$ of degree $3$ can be realized on the space of elements with generic trace $0$ in the separable Jordan algebra $\cJ\times\FF$ of degree $4$ (see Propositions 5.6 and 6.5) so that $\CD(\cJ\times\FF,1)$ is isomorphic to the structurable matrix algebra defined by $(T,N,N)$. We will now exhibit this isomorphism for the case $\cJ=\alb$ and see that it also works in the case $\chr\FF=3$.

\begin{remark}
If $\chr\FF=3$, we can still define ``structurable matrix algebras'' starting from the cubic form $N(x)$ and taking its polarization for the symmetric trilinear form $N(x,y,z)$.
\end{remark}

For the admissible triple $(T,N,N)$ on $\alb$, we have $x^\#:=x^2-T(x)x+S(x)1$ ({\em Freudenthal adjoint}),  $x\times y=(x+y)^\#-x^\#-y^\#$ ({\em Freudenthal cross product}) and $S(x)=\frac{1}{2}(T(x)^2-T(x^2))$ for any $x,y\in\alb$, hence we have the identities
\begin{equation}\label{eq:cross_product}
x\times x = 2x^2 - 2T(x)x + \bigl(T(x)^{2}- T(x^2)\bigr)1 \quad \text{and} \quad  x\times 1 = T(x)1-x.
\end{equation}
Let $\tilde{\cA}$ be the corresponding structurable matrix algebra and let $s_0=\matr{1 & 0 \\ 0 & -1}$, so $s_0$ spans the space of skew elements and $s_0^2=1$. For $x\in\alb$, denote $\eta(x)= \matr{0 & x \\ 0 & 0}$ and $\eta'(x)= \matr{0 & 0 \\ x & 0}$. The subalgebra $\tilde{\cB}:=\{\eta(x)+\eta'(x)+\lambda 1\;|\;x\in\alb,\lambda\in\FF\}$ of $\tilde{\cA}$ consists of symmetric elements, so it is a Jordan algebra. We claim that it is isomorphic to $\alb\times\FF$. Indeed, define a linear injection $\iota\colon\alb\rightarrow\tilde{\cB}$ by setting  $\iota(x)=\frac{1}{4}\bigl(\eta(2x-T(x)1) + \eta'(2x-T(x)1) + T(x)1\bigr)$ for all $x\in\alb$. Using identities \eqref{eq:cross_product}, one verifies that $\iota(x)^2=\iota(x^2)$, so $\iota$ is a nonunital monomorphism of algebras. Then $e_\alb=\iota(1)$ and $e_\FF=1-e_\alb$ are orthogonal idempotents and $\tilde{\cB}=\iota(\alb)\oplus \FF e_\FF$. We conclude that $\alb\times\FF \to \tilde{\cB}$, $(x,\lambda)\mapsto\iota(x)+\lambda e_\FF$, is an isomorphism of algebras. This isomorphism extends to an isomorphism $\CD(\alb\times\FF,1)=(\alb\times\FF)\oplus v'(\alb\times\FF)\rightarrow\tilde{\cA}$ sending $v'$ to $s_0$.

\section{A construction in terms of the double of $\HQ$}\label{s:H4}

Let $\cQ$ be the split quaternion algebra over a field $\FF$, $\chr{\FF}\ne 2$, i.e., $\cQ\cong M_2(\FF)$ and the standard involution switches $E_{11}$ with $E_{22}$ and multiplies both $E_{12}$ and $E_{21}$ by $-1$. The subalgebra $\cK=\lspan{E_{11},E_{22}}$ is isomorphic to $\FF\times\FF$ with exchange involution.

Consider the associative algebra $M_4(\cQ)$ with involution $(q_{ij})^*=(\wb{q}_{ji})$. Since $M_4(\cQ)\cong M_4(\FF)\ot\cQ$, we can alternatively write the elements of $M_4(\cQ)$ as sums of tensor products or as $2\times 2$ matrices over $M_4(\FF)$. The involution on $M_4(\cQ)$ is the tensor product of matrix transpose $x\mapsto x^t$ on $M_4(\FF)$ and the standard involution on $\cQ$. Consider the Jordan subalgebra of symmetric elements
\[
\HQ=\{a\in M_4(\cQ)\;|\;a^*=a\}=\left\{\begin{pmatrix}z&x\\y&z^t\end{pmatrix}\;|\;x,y,z\in M_4(\FF),\,x^t=-x,\,y^t=-y\right\}.
\]
Note that the subalgebra $\HK\subset\HQ$ is isomorphic to $\Jord{M_4(\FF)}$.

Let $\cJ=\HQ$ and define $\cA=\CD(\cJ,1)=\cJ\oplus v\cJ$ as in Subsection \ref{ss:Brown_via_CD}. We want to construct a $\ZZ_4^3$-grading on $\cA$ assuming $\FF$ contains a 4-th root of unity $\bi$. The construction will proceed in two steps: first we define a $\ZZ_4$-grading on $\cA$ and then refine it using two commuting automorphisms of order $4$. Hence, the subalgebra $\HK\oplus v\HK$ will carry a $\ZZ_2\times\ZZ_4^2$-grading. The elements of $\ZZ_4$ will be written as integers with a bar.

The even components of the $\ZZ_4$-grading are just $\cA_\zero=\HK$ and $\cA_\two=v\HK$. The odd components are as follows:
\begin{align*}
\cA_\one&=\{x\ot E_{12}+v(y\ot E_{21})\;|\;x,y\in M_4(\FF),\,x^t=-x,\,y^t=-y\}\quad\mbox{and}\\
\cA_\three&=\{x\ot E_{21}+v(y\ot E_{12})\;|\;x,y\in M_4(\FF),\,x^t=-x,\,y^t=-y\}=v\cA_\one.
\end{align*}
It is straightforward to verify that $\cA=\cA_\zero\oplus\cA_\one\oplus\cA_\two\oplus\cA_\three$ is indeed a $\ZZ_4$-grading. Moreover, the coarsening induced by the quotient map $\ZZ_4\to\ZZ_2$ is the $\ZZ_2$-grading obtained by extending the standard $\ZZ_2$-grading of $\cQ=M_2(\FF)$.

The algebra $M_4(\FF)$ has a $\ZZ_4^2$-grading associated to the generalized Pauli matrices:
\[
X=\begin{bmatrix}1&0&0&0\\0&\bi&0&0\\0&0&-1&0\\0&0&0&-\bi\end{bmatrix}\quad\mbox{and}\quad
Y=\begin{bmatrix}0&1&0&0\\0&0&1&0\\0&0&0&1\\1&0&0&0\end{bmatrix},
\]
namely, the component of degree $(\bar{k},\bar{\ell})$ is $\FF X^kY^\ell$. This grading is the eigenspace decomposition with respect to the commuting order $4$ automorphisms $\Ad X$ and $\Ad Y$. Note that the group $\GL_4(\FF)$ acts on $\sym_4(\cQ)$ via $g\mapsto\Ad\matr{g&0\\0&(g^t)^{-1}}$. Indeed, this matrix is unitary with respect to our involution on $M_4(\cQ)$ and hence the conjugation leaves the space $\HQ$ invariant. Explicitly, the action on $\HQ$ is the following:
\begin{equation}\label{eq:GL4_action}
g\cdot\begin{pmatrix}z&x\\y&z^t\end{pmatrix}=\begin{pmatrix}gzg^{-1}&gxg^t\\(g^{-1})^tyg^{-1}&(gzg^{-1})^t\end{pmatrix}
\quad\mbox{for all }x,y,z\in M_4(\FF),\,x^t=-x,\,y^t=-y.
\end{equation}
Substituting $X$ and $Y$ for $g$, we obtain two order $4$ automorphisms of $\HQ$, which will be denoted by $\vphi$ and $\psi$, respectively. Observe, however, that $\vphi$ and $\psi$ do not commute: their commutator is the identity on the even component of $\HQ$, relative to the standard $\ZZ_2$-grading of $\cQ$, and the negative identity on the odd component. In fact, the classification of gradings on $\HQ$ is the same as on the (split) Lie algebra of type $C_4$ (since our involution on $M_4(\cQ)\cong M_8(\FF)$ is symplectic), and the latter algebra does not admit a group grading whose support generates $\ZZ_4^2$. The good news is that the extensions of $\vphi$ and $\psi$ (which we denote by the same letters) preserve the $\ZZ_4$-grading of $\cA$, so each of them can be used to refine it to a $\ZZ_4^2$-grading.

To resolve the difficulty described above, we are going to construct another order $4$ automorphism $\pi$ of $\cA$ preserving the $\ZZ_4$-grading and use $\pi$ to make a correction to $\vphi$. We want $\pi$ to be the identity on the even component $\cA_\zero\oplus\cA_\two$ and switch around the terms containing $E_{12}$ and $E_{21}$ for the elements in the odd component $\cA_\one\oplus\cA_\three$. Observe that the spaces $U=\{x\ot E_{12}\;|\;x^t=-x\}$ and $V=\{y\ot E_{21}\;|\;y^t=-y\}$ are dual $\GL_4(\FF)$-modules with respect to the action \eqref{eq:GL4_action}. Formally, their duality can be established via the invariant nondegenerate pairing $(x\ot E_{12},y\ot E_{21})=-\frac12\tr(xy)$, which is a scaling of the restriction of the trace form of $M_8(\FF)$. Under this pairing, the bases of skew-symmetrized matrix units are dual to each other. Recall that, for any skew-symmetric matrix $x$ of size $2k$, we have $\det(x)=\pf(x)^2$ where $\pf(x)$, called Pfaffian, is a homogeneous polynomial of degree $k$ in the entries of $x$. An important property of Pfaffian is $\pf(gxg^t)=\det(g)\pf(x)$ for any $g$ and skew-symmetric $x$, so $\pf(x)$ is invariant under the action of $\SL_{2k}(\FF)$ on the space $\sks_{2k}(\FF)$ given by $g\cdot x=gxg^t$. For $k=2$, the Pfaffian is a nondegenerate quadratic form, namely,
\begin{equation*}
\pf(x)=x_{12}x_{34}-x_{13}x_{24}+x_{14}x_{23}\quad\mbox{for all }x\in\sks_4(\FF),
\end{equation*}
so it can be used to identify the $\SL_4(\FF)$-module $\sks_4(\FF)$ with its dual module. Identifying $U$  with $\sks_4(\FF)$ and $V$ with $\sks_4(\FF)^*$ as above, we obtain an $\SL_4(\FF)$-equivariant isomorphism $U\to V$. Using the basis of skew-symmetrized matrix units in $\sks_4(\FF)$, we immediately see that the isomorphism is given by $x\ot E_{12}\mapsto\wh{x}\ot E_{21}$ where
\begin{equation}\label{df:hat}
\mbox{if}\quad
x=\begin{bmatrix}0&\alpha&\beta&\gamma\\&0&\delta&\veps\\&&0&\zeta\\\mbox{skew}&&&0\end{bmatrix}\quad\mbox{then}\quad
\wh{x}=\begin{bmatrix}0&\zeta&-\veps&\delta\\&0&\gamma&-\beta\\&&0&\alpha\\\mbox{skew}&&&0\end{bmatrix}.
\end{equation}
By construction, we have $\wh{g\cdot x}=(g^{-1})^t\cdot\wh{x}$ for all $g\in\SL_4(\FF)$. This implies that, more generally,
\begin{equation}\label{eq:two_GL_actions}
\wh{gxg^t}=\det(g)(g^{-1})^t\wh{x}g^{-1}\quad\mbox{for all }x\in\sks_4(\FF)\mbox{ and }g\in\GL_4(\FF),
\end{equation}
and also, passing to the corresponding Lie algebra,
\begin{equation}\label{eq:two_gl_actions}
\wh{zx+xz^t}=\tr(z)\wh{x}-(z^t\wh{x}+\wh{x}z)\quad\mbox{for all }x\in\sks_4(\FF)\mbox{ and }z\in\Gl_4(\FF).
\end{equation}
Finally, we define $\pi\colon\cA\to\cA$ as identity on $\cA_\zero\oplus\cA_\two$ and
\begin{equation}\label{df:pi}
\begin{array}{ll}
\pi(x\ot E_{12})=-v(\wh{x}\ot E_{21}),& \pi(v(x\ot E_{12}))=-\wh{x}\ot E_{21},\\
\pi(x\ot E_{21})=\phantom{+}v(\wh{x}\ot E_{12}),& \pi(v(x\ot E_{21}))=\phantom{+}\wh{x}\ot E_{12}.
\end{array}
\end{equation}
Clearly, $\pi$ preserves the $\ZZ_4$-grading and $\pi^4=\id$.

\begin{lemma}
The map $\pi$ is an automorphism of $\cA$.
\end{lemma}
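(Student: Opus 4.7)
Since $\pi$ is a bijective linear map preserving the $\ZZ_4$-grading, multiplicativity $\pi(ab)=\pi(a)\pi(b)$ need only be checked on pairs of homogeneous components. The plan is to reduce all sixteen cases to two representative computations: a ``mixed'' case of the form $\cA_\zero\cdot\cA_\one\to\cA_\one$, and the ``hard'' case $\cA_\one\cdot\cA_\one\to\cA_\two$. On the subalgebra $\cA_\zero\oplus\cA_\two=\HK\oplus v\HK$ (where $\HK\cong\Jord{M_4(\FF)}$ is a $\theta$-stable Jordan subalgebra of $\HQ$), $\pi$ acts as the identity and the associated cases are trivial. A direct check also shows $\pi\theta=\theta\pi$ on each of the four graded components; combined with the Cayley--Dickson formulas \eqref{eq:mult_of_A}, this lets the $\cA_\three$-involving cases be derived from those involving $\cA_\one$ only.

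For the mixed cases I would introduce the shorthand $[z]=\matr{z&0\\ 0&z^t}$, $\alpha(x)=x\ot E_{12}$, $\beta(y)=y\ot E_{21}$ with $x,y$ skew-symmetric, and compute the Jordan products in $\HQ$:
\[
[z]\cdot\alpha(x)=\alpha\bigl(\tfrac12(zx+xz^t)\bigr),\quad [z]\cdot\beta(y)=\beta\bigl(\tfrac12(z^ty+yz)\bigr),\quad \alpha(x)\cdot\beta(y)=[\tfrac12\,xy],
\]
while $\alpha(x)\cdot\alpha(x')=\beta(y)\cdot\beta(y')=0$. Identity \eqref{eq:two_gl_actions}, $\wh{zx+xz^t}=\tr(z)\wh x-(z^t\wh x+\wh x z)$, combined with $\theta([z])=[-z+\tfrac12\tr(z)I]$ (from $\theta(b)=-b+T(b)/2$ and $T([z])=\tr(z)$ in the degree-$4$ algebra $\HQ$), makes $\pi([z]\cdot\alpha(x))$ match $[z]\cdot\pi(\alpha(x))$. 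The dual identity $\wh{z^ty+yz}=\tr(z)\wh y-(z\wh y+\wh y z^t)$---obtained by replacing $z$ with $z^t$ in \eqref{eq:two_gl_actions}---handles the $\beta$ version and the products with $v[z]$ (which, in contrast to those internal to $\cJ$, do not commute with odd elements of $\cA$).

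The hard case is the crux. Writing $a=\alpha(x)+v\beta(y)$, $b=\alpha(x')+v\beta(y')$ and expanding $ab$ and $\pi(a)\pi(b)$ via \eqref{eq:mult_of_A}, the equality $\pi(ab)=\pi(a)\pi(b)$ reduces to the single algebraic identity
\begin{equation*}
ab+\wh{b}\wh{a}=\tfrac12\tr(ab)\cdot I\qquad\text{for all skew-symmetric }a,b\in M_4(\FF).
\end{equation*}
Establishing this identity is the main obstacle. My plan is: first verify the quadratic version $a^2+\wh{a}^2=\tfrac12\tr(a^2)I$ by direct inspection on the basis $\{E_{ij}-E_{ji}:i<j\}$ of skew-symmetric matrices (each diagonal entry collapses to $-\sum_{j<k}a_{jk}^2$, and each off-diagonal entry cancels pairwise). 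Polarizing yields the symmetric companion $(ab+ba)+(\wh a\wh b+\wh b\wh a)=\tr(ab)I$. To upgrade to the asymmetric identity displayed above, it suffices to prove $[a,b]=[\wh a,\wh b]$ in $M_4(\FF)$; this in turn follows from the decomposition of skew-symmetric $4\times 4$ matrices into self-dual and anti-self-dual parts (on which $\wh{\,\cdot\,}$ acts by $+1$ and $-1$, respectively), which commute with each other as subspaces of $M_4(\FF)$, so the sign flips cancel out in any commutator. With the identity in hand, the verification of the remaining cases is a routine expansion.
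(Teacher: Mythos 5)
Your proof is correct, and it follows the same broad strategy as the paper (reduce everything to two identities about the hat map, namely \eqref{eq:two_gl_actions} for the mixed cases and $ab+\wh{b}\wh{a}=\tfrac12\tr(ab)1$---which is exactly the paper's \eqref{eq:trace_hat}---for the hard case, then use the Cayley--Dickson formulas \eqref{eq:mult_of_A} to propagate to the rest of $\cA$), but it diverges on two points that are worth noting.

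First, your reduction is organized by pairs of $\ZZ_4$-homogeneous components, whereas the paper verifies multiplicativity directly for arbitrary $a_1,a_2\in\HQ=\cA_\zero\oplus\cA_\one$ in one computation and then appeals to $\pi L_v=L_v\pi$ and $\pi\theta=\theta\pi$ together with \eqref{eq:mult_of_A}. Your component-wise split is a legitimate alternative, but be aware that to push the $\cA_\two$- and $\cA_\three$-cases back to $\cA_\one$ you need \emph{both} $\pi\theta=\theta\pi$ and $\pi L_v=L_v\pi$; you mention only the former. Commutation with $L_v$ is immediate from \eqref{df:pi} and $L_v^2=\id$, so this is not a gap, but it should be stated.

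Second, and more substantively, your proof of \eqref{eq:trace_hat} is genuinely different. The paper dispatches it with a one-line representation-theoretic argument: both $(x,y)\mapsto x\wh{y}+y\wh{x}$ and $(x,y)\mapsto\tr(x\wh{y})1$ are symmetric bilinear $\SL_4(\FF)$-equivariant maps $\sks_4(\FF)\times\sks_4(\FF)\to M_4(\FF)$, and that space is one-dimensional. You instead verify the quadratic case $a^2+\wh{a}^2=\tfrac12\tr(a^2)1$ by direct inspection, polarize to the symmetrized identity, and then upgrade to the asymmetric one by showing $[a,b]=[\wh{a},\wh{b}]$ via the splitting of $\sks_4(\FF)$ into $\wh{\phantom{a}}$-eigenspaces (the two $\frsl_2$-ideals of $\frso_4$). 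This is a clean calculation-based alternative. The commutator step is correct: $\wh{gxg^{-1}}=g\wh{x}g^{-1}$ for $g\in\SO_4(\FF)$ (take $g^t=g^{-1}$, $\det g=1$ in \eqref{eq:two_GL_actions}), so the $\pm1$-eigenspaces of $\wh{\phantom{a}}$ are $\Ad\SO_4$-stable, hence ideals of $\frso_4\cong\frsl_2\oplus\frsl_2$, hence centralize each other; writing $a=a^++a^-$, $b=b^++b^-$ one then gets $[a,b]=[a^+,b^+]+[a^-,b^-]=[\wh a,\wh b]$. The trade-off: the paper's argument is slicker but quotes a Schur-type dimension count; yours is more explicit but relies on knowing the $\frso_4=\frsl_2\oplus\frsl_2$ decomposition. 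Either is acceptable, and your route has the advantage of being checkable by hand without representation theory, at the cost of a longer polarization chain in the $\cA_\one\cdot\cA_\one$ case than the phrase ``routine expansion'' suggests.
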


\begin{proof}
For $a_i=\matr{z_i&x_i\\y_i&z_i^t}\in\HQ$, $i=1,2$, we have, on the one hand,
\[
\begin{split}
\pi(a_1a_2)=&
\pi\begin{pmatrix}
\frac12(z_1z_2+z_2z_1)+\frac12(x_1y_2+x_2y_1)&\frac12(z_1x_2+x_2z_1^t)+\frac12(z_2x_1+x_1z_2^t)\\
\frac12(z_1^ty_2+y_2z_1)+\frac12(z_2^ty_1+y_1z_2)&\bigl(\frac12(z_1z_2+z_2z_1)+\frac12(x_1y_2+x_2y_1)\bigr)^t
\end{pmatrix}\\
=&\frac12\begin{pmatrix}
(z_1z_2+z_2z_1)+(x_1y_2+x_2y_1)&0\\
0&\bigl((z_1z_2+z_2z_1)+(x_1y_2+x_2y_1)\bigr)^t
\end{pmatrix}\\
&+\frac12v
\begin{pmatrix}
0&(\wh{z_1^ty_2+y_2z_1})+(\wh{z_2^ty_1+y_1z_2})\\
-(\wh{z_1x_2+x_2z_1^t})-(\wh{z_2x_1+x_1z_2^t})&0
\end{pmatrix}
\end{split}
\]
and, on the other hand,
\[
\begin{split}
\pi(a_1)\pi(a_2)=&
\left(
\begin{pmatrix}z_1&0\\0&z_1^t\end{pmatrix}+v\begin{pmatrix}0&\wh{y}_1\\-\wh{x}_1&0\end{pmatrix}
\right)\left(
\begin{pmatrix}z_2&0\\0&z_2^t\end{pmatrix}+v\begin{pmatrix}0&\wh{y}_2\\-\wh{x}_2&0\end{pmatrix}
\right)\\
=&\frac12\begin{pmatrix}
z_1z_2+z_2z_1&0\\
0&(z_1z_2+z_2z_1)^t
\end{pmatrix}
+\frac12\begin{pmatrix}
\wh{y}_1\wh{x}_2+\wh{y}_2\wh{x}_1&0\\
0&\wh{x}_1\wh{y}_2+\wh{x}_2\wh{y}_1
\end{pmatrix}^\theta
\\
&+\frac12v\left(
\tr(z_1)\begin{pmatrix}
0&\wh{y}_2\\
-\wh{x}_2&0
\end{pmatrix}
-\begin{pmatrix}
0&z_1\wh{y}_2+\wh{y}_2z_1^t\\
-z_1^t\wh{x}_2-\wh{x}_2z_1&0
\end{pmatrix}
\right)
\\
&+\frac12v\left(
\tr(z_2)\begin{pmatrix}
0&\wh{y}_1\\
-\wh{x}_1&0
\end{pmatrix}
-\begin{pmatrix}
0&z_2\wh{y}_1+\wh{y}_1z_2^t\\
-z_2^t\wh{x}_1-\wh{x}_1z_2&0
\end{pmatrix}\right).
\end{split}
\]
Examining the two expressions and taking into account identities \eqref{eq:two_gl_actions}, above, and \eqref{eq:trace_hat}, below, we see that $\pi(a_1a_2)=\pi(a_1)\pi(a_2)$. The identity
\begin{equation}\label{eq:trace_hat}
xy+\wh{y}\wh{x}=\frac12\tr(xy)1\quad\mbox{for all }x,y\in\sks_4(\FF)
\end{equation}
can be verified  by a direct computation, but it is a consequence of the fact that both $(x,y)\mapsto x\wh{y}+y\wh{x}$ and $(x,y)\mapsto\tr(x\wh{y})1$ are symmetric bilinear $\SL_4(\FF)$-equivariant maps $\sks_4(\FF)\times\sks_4(\FF)\to M_4(\FF)$, and the space of such maps has dimension $1$.

It remains to observe that $\pi$ commutes with the left multiplication by $v$ and with $\theta$ (hence also with the right multiplication by $v$), so we compute:
\begin{align*}
&\pi((va)b)=\pi(v(a^\theta b^\theta)^\theta)=v(\pi(a^\theta b^\theta))^\theta
=v(\pi(a)^\theta\pi(b)^\theta)^\theta=(v\pi(a))\pi(b)=\pi(va)\pi(b),\\
&\pi(a(vb))=\pi(v(a^\theta b))=v\pi(a^\theta b)=v(\pi(a)^\theta\pi(b))=\pi(a)(v\pi(b))=\pi(a)\pi(vb),\\
&\pi((va)(vb))=\pi((a b^\theta)^\theta)=\pi(a b^\theta)^\theta=(\pi(a)\pi(b)^\theta)^\theta=(v\pi(a))(v\pi(b))=\pi(va)\pi(vb),
\end{align*}
for all $a,b\in\HQ$, where we have made use of identities \eqref{eq:mult_of_A}.
\end{proof}

\begin{remark}
Identity \eqref{eq:trace_hat} is equivalent to $x\wh{x}=-\pf(x)1$, i.e., $\wh{x}$ is the negative of the so-called {\em Pfaffian adjoint} of $x$.
\end{remark}

Now that we have the automorphism $\pi$, it is easy to construct the $\ZZ_4^3$-grading on $\cA$. It follows from \eqref{eq:GL4_action}, \eqref{eq:two_GL_actions} and the fact $\det(X)=\det(Y)=-1$ that $\pi$ commutes with each of $\vphi$ and $\psi$ on $\cA_\zero\oplus\cA_\two$ and anticommutes on $\cA_\one\oplus\cA_\three$. We will keep $\psi$ and replace $\vphi$ by the composition of $\pi$ and the action of $\wt{X}=\diag(\omega,\omega^3,\omega^5,\omega^7)$ given by \eqref{eq:GL4_action}, where $\omega^2=\bi$. (We can temporarily extend $\FF$ if necessary so that it contains such an element.) We will denote this composition by $\wt{\vphi}$. Since $\wt{X}$ is a scalar multiple of $X$, its action still commutes with the action of $Y$ on $\cA_\zero\oplus\cA_\two$ and anticommutes on $\cA_\one\oplus\cA_\three$. But $\det(\wt{X})=1$, so the action of $\wt{X}$ commutes with $\pi$ everywhere. Therefore, $\wt{\vphi}$ and $\psi$ are commuting order $4$ automorphisms of $\cA$. Since they preserve the $\ZZ_4$-grading, taking the eigenspace decomposition of each component with respect to $\wt{\vphi}$ and $\psi$ is the desired $\ZZ_4^3$-grading of $\cA$.

Let us calculate the homogeneous elements. The matrices $X^k Y^\ell$ form a basis of $M_4(\FF)$ and are eigenvectors for $\Ad X$ with eigenvalues $(-\bi)^\ell$ and for $\Ad Y$  with eigenvalues $\bi^k$, where $k,\ell=0,1,2,3$. Taking the convention that
\[
\cA_{(\bar{j},\bar{k},\bar{\ell})}=\{a\in\cA_{\bar{j}}\;|\;\psi(a)=\bi^k,\,\wt{\vphi}(a)=(-\bi)^\ell\}
\]
and recalling that on $\cA_\zero\oplus\cA_\two$ the automorphisms $\wt{\vphi}$ and $\psi$ act as $X$ and $Y$, respectively, we see that
\begin{align*}
\cA_{(\zero,\bar{k},\bar{\ell})}&=\FF(X^kY^\ell\ot E_{11}+(X^kY^\ell)^t\ot E_{22});\\
\cA_{(\two,\bar{k},\bar{\ell})}&=\FF v(X^kY^\ell\ot E_{11}+(X^kY^\ell)^t\ot E_{22}).
\end{align*}
To find the homogeneous elements in $\cA_\one$ and $\cA_\three$, we will use the matrices
{\small
\[
\xi_{1,2}=\begin{bmatrix}0&1&0&\mp 1\\&0&\pm 1&0\\&&0&1\\\mbox{skew}&&&0\end{bmatrix},\;
\xi_{3,4}=\begin{bmatrix}0&1&0&\pm\bi\\&0&\pm\bi&0\\&&0&-1\\\mbox{skew}&&&0\end{bmatrix},\;
\xi_{5,6}=\begin{bmatrix}0&0&1&0\\&0&0&\pm\bi\\&&0&0\\\mbox{skew}&&&0\end{bmatrix}
\]
}
as a basis for $\sks_4(\FF)$. They are eigenvectors for the action of $Y$, with eigenvalues $\pm 1$, $\pm\bi$ and $\pm\bi$, respectively. Hence, the elements $\xi_i\ot E_{12}$ and $v(\xi_i\ot E_{21})$ of $\cA_\one$ have the same eigenvalues with respect to $\psi$. (We are using the fact $Y^t=Y^{-1}$.) Finally, the action of $\wt{\vphi}$ on $\cA_\one$ is given by
\[
\wt{\vphi}(x\ot E_{12}+v(y\ot E_{21}))=\wt{X}\wh{y}\wt{X}^t\ot E_{12}-v((\wt{X}^{-1})^t\wh{x}\wt{X}^{-1}\ot E_{21}).
\]
One checks using \eqref{df:hat} that, for $i=1,2,3,4$, the elements $\xi_i\ot E_{12}\pm\bi v(\xi_i\ot E_{21})$ are eigenvectors with respect to $\wt{\vphi}$. Their eigenvalues are $\mp\bi$ if $i=1$ or $2$, and $\pm\bi$ if $i=3$ or $4$. Also, for $i=5,6$, the elements $\xi_i\ot E_{12}\pm v(\xi_i\ot E_{21})$ are eigenvectors with respect to $\wt{\vphi}$, with eigenvalues $\pm 1$. Putting this information together, we obtain:
\begin{align*}
\cA_{(\one,\zero,\bar{\ell})}&=\FF(\xi_1\ot E_{12}+\bi^\ell v(\xi_1\ot E_{21})),\,\ell=1,3;\\
\cA_{(\one,\two,\bar{\ell})}&=\FF(\xi_2\ot E_{12}+\bi^\ell v(\xi_2\ot E_{21})),\,\ell=1,3;\\
\cA_{(\one,\one,\bar{\ell})}&=\FF(\xi_3\ot E_{12}-\bi^\ell v(\xi_3\ot E_{21})),\,\ell=1,3;\\
\cA_{(\one,\three,\bar{\ell})}&=\FF(\xi_4\ot E_{12}-\bi^\ell v(\xi_4\ot E_{21})),\,\ell=1,3;\\
\cA_{(\one,\one,\bar{\ell})}&=\FF(\xi_5\ot E_{12}+\bi^\ell v(\xi_5\ot E_{21})),\,\ell=0,2;\\
\cA_{(\one,\three,\bar{\ell})}&=\FF(\xi_6\ot E_{12}+\bi^\ell v(\xi_6\ot E_{21})),\,\ell=0,2.
\end{align*}
Since $v$ is homogeneous of degree $(\two,\zero,\zero)$, each component $\cA_{(\three,\bar{k},\bar{\ell})}$ can be obtained from $\cA_{(\one,\bar{k},\bar{\ell})}$ using left multiplication by $v$, which amount to switching $E_{12}$ and $E_{21}$.

Since all homogeneous components have dimension $1$, our $\ZZ_4^3$-grading on $\cA$ is fine. The support is a proper subset of $\ZZ_4^3$, which can be characterized as follows using the distinguished element $g_0=(\two,\zero,\zero)$ (the degree of $v$): an element $g$ does not belong to the support if and only if $2g=g_0$.

\section{A construction in terms of structurable matrix algebra}\label{s:MatrixStructAlgebra}

In this section, we will construct a $\ZZ_4^3$-grading for the model of the split Brown algebra as in Subsection \ref{ss:Brown_via_matr}, assuming $\FF$ contains a $4$-th root of unity $\bi$. Let $\cC$ be the split octonion algebra and let $\alb=\sym_3(\cC)$ be the split Albert algebra, with generic trace $T$ and generic norm $N$. Consider the structurable matrix algebra $\cA$ associated to the admissible triple $(T,N,N)$, i.e., the product is given by \eqref{matrstructproduct} and the involution is given by \eqref{matrstructinvol}. Note that the Freudenthal cross product on $\alb$, which appears in \eqref{matrstructproduct}, is given by:
\begin{itemize}
\item[i)] $E_i\times E_{i+1}=E_{i+2}$,\; $E_i\times E_i=0$,
\item[ii)] $E_i\times\iota_i(x)=-\iota_i(x)$,\; $E_i\times\iota_{i+1}(x)=0=E_i\times\iota_{i+2}(x)$,
\item[iii)] $\iota_i(x)\times\iota_i(y)=-4n(x,y)E_i$,\; $\iota_i(x)\times\iota_{i+1}(y)=2\iota_{i+2}(\bar x\bar y)$.
\end{itemize}

As shown in \cite{AM99}, for the $\ZZ_2^3$-grading on the split Cayley algebra $\cC$ one can choose a homogeneous basis $\{ x_g \mid g\in \ZZ_2^3 \}$ such that the product is given by $x_g x_h=\sigma(g,h)x_{g+h}$ where
\begin{align*}
& \sigma(g,h)=(-1)^{\psi(g,h)}, \\
& \psi(g,h)=h_1 g_2 g_3 + g_1 h_2 g_3 + g_1 g_2 h_3 +
\sum_{i\leq j} g_i h_j.
\end{align*}
Consider the para-Cayley algebra associated to $\cC$, i.e., the same vector space with the new product $x*y=\bar x\bar y$. Note that $x_g*x_h =\gamma(g,h)x_{g+h}$ where
\begin{align*}
& \gamma(g,h)=s(g)s(h)\sigma(g,h), \\
& s(g) = (-1)^{\phi(g)}, \\
& \phi(g) = \sum_ig_i + \sum_{i<j}g_ig_j +  g_1g_2g_3 ,
\end{align*}
because $s(g)=-1$ if $g\ne 0$ and $s(0)=1$, so $\bar x_g = s(g)x_g$ for all $g\in\ZZ_2^3$.

Denote $a_0=0$, $a_1=(\bar0,\bar1,\bar0)$, $a_2=(\bar1,\bar0,\bar0)$, $a_3=a_1+a_2$, $g_0=(\bar0,\bar0,\bar1)$ in $\ZZ_2^3$. We will consider the quaternion algebra $\cQ=\lspan{x_{a_i}}$ with the ordered basis $B_\cQ=\{x_{a_i} \;|\; i=0,1,2,3\}$, and $\cQ^\perp=\lspan{x_{g_0+a_i}}$ with the ordered basis $B_{\cQ^\perp}=\{x_{g_0+a_i}\;|\; i=0,1,2,3\}$. Thus, $B_\cC=B_\cQ\cup B_{\cQ^\perp}$ is an ordered basis of $\cC$. It will be convenient to write the values $\gamma(g,h)$ as an $8\times 8$ matrix according to this ordering and split this matrix into $4\times 4$ blocks: $\gamma=\matr{\gamma_{11} & \gamma_{12} \\ \gamma_{21} & \gamma_{22}}$, so $\gamma_{11}$ records the values for the support of $\cQ$, etc., and similarly for $\gamma(g,h)$.

A straightforward calculation shows that
\begin{gather} \label{gammamatrix}
\begin{aligned}
& \gamma_{11}=\begin{bmatrix} 1 & -1 & -1 & -1 \\ -1 & -1 & 1 & -1 \\ -1 & -1 & -1 & 1 \\ -1 & 1 & -1 & -1 \end{bmatrix}, \quad
& \gamma_{12}=\begin{bmatrix} -1 & -1 & -1 & -1 \\ -1 & 1 & -1 & 1 \\ -1 & 1 & 1 & -1 \\ -1 & -1 & 1 & 1 \end{bmatrix}, \\
& \gamma_{21}=\begin{bmatrix} -1 & 1 & 1 & 1 \\ -1 & -1 & -1 & 1 \\ -1 & 1 & -1 & -1 \\ -1 & -1 & 1 & -1 \end{bmatrix}, \quad
& \gamma_{22}=\begin{bmatrix} -1 & -1 & -1 & -1 \\ 1 & -1 & -1 & 1 \\ 1 & 1 & -1 & -1\\ 1 & -1 & 1 & -1 \end{bmatrix}.
\end{aligned}\end{gather}
Define $\sigma_j(h):=\sigma(a_j,g_0+h)$ for any $h\in\supp \cQ^\perp$, $j=1,2,3$. Note that the matrix of $\sigma(a_j,g_0+h)$, $h\in B_{\cQ^\perp}$, coincides with the matrix $\sigma_{11}$, which is given by
\begin{equation} \label{sigmamatrix}
\sigma_{11}=(\sigma(a_j,a_k))_{j,k}=\begin{bmatrix} 1 & 1 & 1 & 1 \\ 1 & -1 & 1 & -1 \\ 1 & -1 & -1 & 1 \\ 1 & 1 & -1 & -1 \end{bmatrix}.
\end{equation}

We will need the following result in our construction of the $\ZZ_4^3$-grading on $\cA$.

\begin{lemma} \label{lemmabasis}
The basis $B_\cC = B_\cQ \cup B_{\cQ^\perp}$ of $\cC$ has the following properties:
\begin{itemize}
\item[$(P_{11})$] \quad  $\gamma(g,g')=\gamma(g+a_j,g'+a_{j+1})$,
\item[$(P_{22})$] \quad $\gamma(h,h')=\sigma_j(h)\sigma_{j+1}(h')\gamma(h+a_j,h'+a_{j+1})$,
\item[$(P_{12})$] \quad  $\gamma(g,h)=\sigma_{j+1}(h)\sigma_{j+2}(g+h)\gamma(g+a_j,h+a_{j+1})$,
\item[$(P_{21})$] \quad  $\gamma(h,g)=\sigma_j(h)\sigma_{j+2}(g+h)\gamma(h+a_j,g+a_{j+1})$,
\end{itemize}
for all $g,g'\in \supp\cQ$, $h,h'\in \supp\cQ^\perp$ and $j\in\{1,2,3\}$.
\end{lemma}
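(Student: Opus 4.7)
The plan is to reduce each identity to a polynomial equation in $\FF_2$ on the exponents appearing in $\gamma(g,h)=(-1)^{\phi(g)+\phi(h)+\psi(g,h)}$ and $\sigma_j(h)=(-1)^{\psi(a_j,g_0+h)}$, and then to verify that equation by direct expansion using the explicit polynomial formulas for $\phi$ and $\psi$, together with the data $a_1=(\bar 0,\bar 1,\bar 0),\, a_2=(\bar 1,\bar 0,\bar 0),\, a_3=(\bar 1,\bar 1,\bar 0),\, g_0=(\bar 0,\bar 0,\bar 1)$. The two computational inputs are the expansion of $\phi(g+a)-\phi(g)-\phi(a)$ into bilinear and mixed-cubic monomials in $(g,a)$, and the expansion of $\psi(g+a,g'+b)-\psi(g,g')-\psi(a,g')-\psi(g,b)-\psi(a,b)$, in which the quadratic part of $\psi$ contributes nothing (being bilinear) and each of the trilinear terms $h_1 g_2 g_3, g_1 h_2 g_3, g_1 g_2 h_3$ produces a handful of ``off-diagonal'' mixed monomials.

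For $(P_{11})$, the constraints $g_3=g'_3=a_{j,3}=a_{j+1,3}=0$ annihilate every mixed cubic contribution on both sides, because each such monomial contains a factor from $\{g_3,g'_3,a_{j,3},a_{j+1,3}\}$. What survives is a bilinear-plus-constant expression in the components of $g,g',a_j,a_{j+1}$. Using $\phi(a_j)=1$ for each $j\in\{1,2,3\}$ (a direct computation from $\phi(g)=g_1+g_2+g_3+g_1g_2+g_1g_3+g_2g_3+g_1g_2g_3$) together with the cyclic relation $a_j+a_{j+1}=a_{j+2}$, one checks case-by-case that the coefficient of each of $g_1,g_2,g'_1,g'_2$ and the remaining constant all reduce to $0$ modulo $2$.

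For $(P_{22})$, $(P_{12})$, $(P_{21})$, the third coordinate of $h$ (or $h'$) now equals $1$, so precisely those trilinear contributions to $\psi$ that died in $(P_{11})$ are revived. The surviving extra monomials are linear in the coordinates of $g_0$ and assemble exactly into expressions of the form $\psi(a_j,g_0+h)$, $\psi(a_{j+1},g_0+h')$, or their analogues with $g+h$ or $h+h'$ in place of $h, h'$ --- whose signs are by definition $\sigma_j(h)$, $\sigma_{j+1}(h')$, $\sigma_{j+2}(g+h)$. These are precisely the correction factors on the right-hand sides of $(P_{22})$, $(P_{12})$, $(P_{21})$, respectively.

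The main obstacle is bookkeeping: after the mod-$2$ reduction each of the four identities becomes an equation among a dozen or so multilinear monomials whose cancellation is ultimately forced by $a_j+a_{j+1}=a_{j+2}$ and $\phi(a_j)=1$, but one has to track every contribution carefully. An equivalent brute-force alternative is to verify the $4\times 3\times 16=192$ individual sign equalities directly against the explicit matrices \eqref{gammamatrix} and \eqref{sigmamatrix}; no step is conceptually deep.
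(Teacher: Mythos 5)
Your proposal is correct and, in its primary form, takes a genuinely different route from the paper. The paper encodes each identity as a $4\times 4$ matrix equation among the blocks of $\gamma$ (introducing permutation matrices and entry-wise products), reduces from $j\in\{1,2,3\}$ to $j=1$ by invoking a cyclic symmetry of those blocks, and then declares the resulting four matrix equations ``straightforward to check.'' You instead work directly with the $\FF_2$-polynomial expressions of $\phi$ and $\psi$, polarize them, use the vanishing of the third coordinates in $\supp\cQ$ and the fact $h_3=1$ on $\supp\cQ^\perp$ to kill or revive the trilinear monomials, and close via the structural facts $\phi(a_j)=1$ and $a_j+a_{j+1}=a_{j+2}$. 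I verified your argument explicitly for $(P_{11})$ and $(P_{22})$ and it checks out: in $(P_{11})$ all cubic contributions vanish and the remaining constant and the coefficients of $g_1,g_2,g_1',g_2'$ all reduce to $0$ mod $2$; in $(P_{22})$ the $\phi$-contributions all collapse to $1$ (since every argument is nonzero), and the remaining $\psi$-difference equals $\psi(a_j,g_0+h)+\psi(a_{j+1},g_0+h')$. Your symbolic route is arguably more illuminating, since it identifies exactly why the cubic terms of $\psi$ are responsible for the $\sigma_j$ correction factors and avoids the paper's unproved appeal to cyclic symmetry of the $\gamma$-blocks; the paper's route is terser and more mechanical. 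Your fallback ``brute-force'' count of $4\times 3\times 16=192$ sign checks is essentially the paper's proof stripped of its matrix packaging and cyclic-symmetry reduction. One small imprecision: your phrase ``linear in the coordinates of $g_0$'' is not quite the right description of the surviving monomials (what actually happens is that after writing $h=g_0+a_k$ and using $(g_0)_1=(g_0)_2=0$, $h_3=1$, the residual terms linear in $h_1,h_2,h_1',h_2'$ match those in the claimed $\psi(a_j,g_0+h)$-type factors), but this does not affect the substance of the argument.
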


\begin{proof}
To shorten the proof, we will use matrices, but we need to introduce some notation. For $j=1,2,3$, let $\sigma_j$ be the column of values $\sigma_j(h)$, $h\in\supp\cQ^\perp$, i.e., $\sigma_j$ is the traspose of the corresponding row of matrix $\sigma_{11}$. We will denote by $\cdot$ the entry-wise product of matrices. (It is interesting to note that the rows and columns of $\sigma_{11}$ are the characters of $\ZZ_2^2$, which is related to the obvious fact $\sigma_j\cdot\sigma_{j+1}=\sigma_{j+2}$.) Denote $M_{\sigma_j}=[\sigma_j|\sigma_j|\sigma_j|\sigma_j]$ (the column $\sigma_j$ repeated $4$ times), and define the permutation matrices
\begin{equation*}
P_1=\left[\begin{smallmatrix} 0&1&0&0 \\ 1&0&0&0 \\ 0&0&0&1 \\ 0&0&1&0 \end{smallmatrix}\right] \quad\mbox{and}\quad
P_2=\left[\begin{smallmatrix} 0&0&1&0 \\ 0&0&0&1 \\ 1&0&0&0 \\ 0&1&0&0 \end{smallmatrix}\right] .
\end{equation*}

Note that the properties asserted in this lemma possess a cyclic symmetry in $j=1,2,3$ (as can be checked in the four blocks of $\gamma$), so it suffices to verify them for $j=1$. Then, property $(P_{11})$ can be written as $\gamma_{11}=P_1\gamma_{11}P_2$, because $P_1\gamma_{11}P_2$ is the matrix associated to $\gamma(g+a_1,g'+a_2)$. Similarly, property $(P_{22})$ can be writen as $\gamma_{22}=M_{\sigma_1}\cdot(P_1\gamma_{22}P_2)\cdot M_{\sigma_2}^t$.
Note that $\sigma$ is multiplicative in the second variable (because $\psi$ is linear in the second variable), so $\sigma_3(g+h)=\sigma(a_3,g+g_0+h)=\sigma(a_3,g)\sigma(a_3,g_0+h)=\sigma_3(g_0+g)\sigma_3(h)$. Therefore, $(P_{12})$ and $(P_{21})$ can be written as  $\gamma_{12}=M_{\sigma_3}\cdot(P_1\gamma_{12}P_2)\cdot M^t_{\sigma_2\cdot\sigma_3}$ and $\gamma_{21}=M_{\sigma_1\cdot\sigma_3}\cdot(P_1\gamma_{21}P_2)\cdot M^t_{\sigma_3}$. It is straightforward to check these four matrix equations.
\end{proof}

We will consider $\ZZ_2^3$ as a subgroup of $\ZZ_4^3$ via the embedding $a_1\mapsto (\bar0,\bar2,\bar0)$, $a_2\mapsto (\bar0,\bar0,\bar2)$, $a_3\mapsto (\bar0,\bar2,\bar2)$ and $g_0\mapsto (\bar2,\bar0,\bar0)$, so we can assume that $\gamma$ and $\sigma$ are defined on a subgroup of $\ZZ_4^3$ and take values as recorded in matrices \eqref{gammamatrix} and \eqref{sigmamatrix}. Define $b_1=(\bar0,\bar1,\bar0)$, $b_2=(\bar0,\bar0,\bar1)$ and $b_3=-b_1-b_2$ in $\ZZ_4^3$. Note that $\sum b_j=0$ and $a_j\mapsto 2b_j$ under the embedding.

Now we will define a $\ZZ_4^3$-grading on $\cA$ by specifying a homogeneous basis. For each $g\in\supp \cQ$, $h\in\supp \cQ^\perp$ and $j\in\{1,2,3\}$, consider the elements of $\cA$:
\begin{align*}
\alpha_{j,g}&:= \begin{pmatrix} 0 & \iota_j(x_g) \\ \iota_j(x_{g+a_j}) & 0 \end{pmatrix}, &
 \alpha'_{j,h}&:= \begin{pmatrix} 0 & \sigma_j(h)\bi \iota_j(x_h) \\ \iota_j(x_{h+a_j}) & 0 \end{pmatrix}, \\
\veps_j&:= \begin{pmatrix} 0 & E_j \\ E_j & 0 \end{pmatrix}, &
 \veps'_j&:=\veps_js_0= \begin{pmatrix} 0 & -E_j \\ E_j & 0 \end{pmatrix}.
\end{align*}
Then $B_\cA=\{1, s_0, \alpha_{j,g}, \alpha_{j,g}s_0, \alpha'_{j,h}, \alpha'_{j,h}s_0,\veps_j,\veps'_j\}$ is a basis of $\cA$. Set
\begin{equation}\label{df:Diego_grad}
\begin{array}{ll}
\deg(1)\bydef 0, & \deg(\veps_j)\bydef a_j,\\
\deg(\alpha_{j,g})\bydef b_j + g, & \deg(\alpha'_{j,h})\bydef (\one,\zero,\zero) + b_j + h,\\
\deg(xs_0)\bydef\deg(x)+g_0 & \mbox{for}\;x\in\{1,\alpha_{j,g},\alpha'_{j,h},\veps_j\}.
\end{array}
\end{equation}

To check that \eqref{df:Diego_grad} defines a $\ZZ_4^3$-grading, we compute the products of basis elements.

\begin{proposition} \label{constants}
For any elements $x,y\in B_\cA\setminus\{1,s_0\}$, $xy=yx$ if $\deg(x)+\deg(y)\neq g_0$ and $xy=-yx$ otherwise. The products of the elements of $B_\cA$ are then determined as follows:
\begin{itemize}
\item[i)] $s_0^2=\veps_j^2=1=-\veps'^2_j$, $\veps_j\veps_{j+1}=\veps_{j+2}$,
\item[ii)] $\veps_j\veps'_j=s_0$, $\veps'_j\veps'_{j+1}=\veps_{j+2}$, $\veps_j\veps'_{j+1}=-\veps'_{j+2}$, $\veps_{j+1}\veps'_j=-\veps'_{j+2}$,
\item[iii)] $\veps_j\alpha_{j,g}=\veps'_j(\alpha_{j,g}s_0)=-\alpha_{j,g+a_j}$,
$\veps_j(\alpha_{j,g}s_0)=\veps'_j\alpha_{j,g}=\alpha_{j,g+a_j}s_0$,
\item[iv)] $\veps_j\alpha'_{j,h}=\veps'_j(\alpha'_{j,h}s_0)=-\bi\sigma_j(h)\alpha'_{j,h+a_j}$, $\veps_j(\alpha'_{j,h}s_0)=\veps'_j\alpha'_{j,h}=\bi\sigma_j(h)\alpha'_{j,h+a_j}s_0$,
\item[v)] $\veps_j\alpha_{k,g}=\veps_j\alpha'_{k,h}=\veps'_j\alpha_{k,g}=\veps'_j\alpha'_{k,h}=0$ if $j\neq k$,
\item[vi)] $\alpha_{j,g}^2=(\alpha_{j,g}s_0)^2=-8\veps_j$, $\alpha_{j,g}\alpha_{j,g+a_j}=-(\alpha_{j,g}s_0)(\alpha_{j,g+a_j}s_0)=8$,
\item[vii)] $\alpha'^2_{j,h}=(\alpha'_{j,h}s_0)^2=8\veps'_j$, $\alpha'_{j,h}\alpha'_{j,h+a_j}=-(\alpha'_{j,h}s_0)(\alpha'_{j,h+a_j}s_0)=8\bi\sigma_j(h)s_0$,
\item[viii)] $\alpha_{j,g}\alpha_{j,g'}=\alpha_{j,g}(\alpha_{j,g'}s_0)=(\alpha_{j,g}s_0)(\alpha_{j,g'}s_0)=0$ if $g'\notin\{ g,g+a_j \}$,
\item[ix)] $\alpha'_{j,h}\alpha'_{j,h'}=\alpha'_{j,h}(\alpha'_{j,h'}s_0)=(\alpha'_{j,h}s_0)(\alpha'_{j,h'}s_0)=0$
if $h'\notin\{ h,h+a_j \}$,
\item[x)] $\alpha_{j,g}\alpha'_{j,h}=(\alpha_{j,g}s_0)\alpha'_{j,h}=\alpha_{j,g}(\alpha'_{j,h}s_0)=(\alpha_{j,g}s_0)(\alpha'_{j,h}s_0)=0$,
\item[xi)] $\alpha_{j,g}\alpha_{j+1,g'}=(\alpha_{j,g} s_0)(\alpha_{j+1,g'} s_0)=2\gamma(g,g')\alpha_{j+2,g+g'+a_{j+2}}$,
\item[xii)] $(\alpha_{j,g}s_0)\alpha_{j+1,g'}=\alpha_{j,g}(\alpha_{j+1,g'} s_0)=-2\gamma(g,g')\alpha_{j+2,g+g'+a_{j+2}} s_0$,
\item[xiii)] $\alpha_{j,g}\alpha'_{j+1,h}=(\alpha_{j,g}s_0)(\alpha'_{j+1,h}s_0)=2\bi\sigma_{j+1}(h)\gamma(g,h)\alpha'_{j+2,g+h+a_{j+2}}$,
\item[xiv)] $(\alpha_{j,g}s_0)\alpha'_{j+1,h}=\alpha_{j,g}(\alpha'_{j+1,h}s_0) =-2\bi\sigma_{j+1}(h)\gamma(g,h)\alpha'_{j+2,g+h+a_{j+2}}s_0$,
\item[xv)] $\alpha'_{j,h}\alpha_{j+1,g}=(\alpha'_{j,h}s_0)(\alpha_{j+1,g}s_0)=2\bi\sigma_j(h)\gamma(h,g)\alpha'_{j+2,h+g+a_{j+2}}$,
\item[xvi)] $(\alpha'_{j,h}s_0)\alpha_{j+1,g}=\alpha'_{j,h}(\alpha_{j+1,g}s_0)=-2\bi\sigma_j(h)\gamma(h,g)\alpha'_{j+2,h+g+a_{j+2}}s_0$,
\item[xvii)] $\alpha'_{j,h}\alpha'_{j+1,h'}=(\alpha'_{j,h}s_0)(\alpha'_{j+1,h'}s_0)
=-2\gamma(h+a_j,h'+a_{j+1})\alpha_{j+2,h+h'+a_{j+2}}s_0$,
\item[xviii)] $(\alpha'_{j,h}s_0)\alpha'_{j+1,h'}=\alpha'_{j,h}(\alpha'_{j+1,h'}s_0)
=2\gamma(h+a_j,h'+a_{j+1})\alpha_{j+2,h+h'+a_{j+2}}$.
\end{itemize}
\end{proposition}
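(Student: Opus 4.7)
The plan is a direct, case-by-case verification of all eighteen families of products using the structurable matrix product \eqref{matrstructproduct} together with the cross-product and trace rules for $\alb$ summarized at the start of this section. First I would record a preliminary: every element of $B_\cA\setminus\{1,s_0\}$ is symmetric under the involution. For $\veps_j,\alpha_{j,g},\alpha'_{j,h}$ this is immediate from \eqref{matrstructinvol}, since they have the form $\matr{0&p\\p'&0}$; for any such $x$, a one-line computation from \eqref{matrstructproduct} gives $xs_0=\matr{0&-p\\p'&0}$ and $s_0x=\matr{0&p\\-p'&0}$, so $xs_0=-s_0x$ and $\overline{xs_0}=\bar{s_0}\bar{x}=-s_0x=xs_0$, making each companion symmetric as well. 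This feeds directly into the commutation claim at the end.

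I would then treat the product table in four blocks. Items (i)--(ii) involve only the idempotents $E_j$ and reduce to $E_i\times E_{i+1}=E_{i+2}$, $E_i\times E_i=0$ and $T(E_j,E_k)=\delta_{jk}$. Items (iii)--(v) follow from the Peirce-type rules $E_j\times\iota_j(a)=-\iota_j(a)$ and $E_j\times\iota_k(a)=0$ for $j\neq k$. Items (vi)--(x), concerning products inside a single index $j$, reduce via $\iota_j(x)\times\iota_j(y)=-4n(x,y)E_j$ and $\iota_j(x)\iota_j(y)=2n(x,y)(E_{j+1}+E_{j+2})$ to polarized-norm computations among the basis vectors $x_g$. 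The point is that $n(x_g,x_{g'})$ vanishes off the diagonal while $n(x_g,x_g)=2$, so only the matched cases survive, producing the $\pm 8$ scalars on the right-hand sides and the vanishings claimed in (viii)--(x).

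The substantive computations are items (xi)--(xviii), the off-diagonal products across consecutive indices. Here the identity $\iota_j(x)\times\iota_{j+1}(y)=2\iota_{j+2}(\bar{x}\bar{y})=2\iota_{j+2}(x\ast y)$ converts the cross product into the para-Cayley product, so that a product like $\alpha_{j,g}\alpha_{j+1,g'}$ has upper-right and lower-left entries carrying the scalars $\gamma(g+a_j,g'+a_{j+1})$ and $\gamma(g,g')$, attached to $\iota_{j+2}$ at indices $g+g'+a_{j+2}$ and $g+g'$ respectively (using $a_j+a_{j+1}=a_{j+2}$ in $\ZZ_2^3$). These two scalars match exactly by property $(P_{11})$ of Lemma \ref{lemmabasis}, so the two entries assemble into $2\gamma(g,g')\alpha_{j+2,g+g'+a_{j+2}}$. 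The primed and mixed analogues (xii)--(xviii) are handled in the same way: $(P_{12}),(P_{21}),(P_{22})$ absorb the $\sigma_j(h)$ prefactors appearing in the definition of $\alpha'_{j,h}$, and the $\bi$-powers combine so that both off-diagonal entries receive the same final scalar---the $\bi^2=-1$ produced by the two $\bi$'s in the lower-left cross product of two primed elements is precisely what accounts for the sign change between, e.g., items (xi) and (xvii). The various $s_0$-companion identities such as $(xs_0)(ys_0)=xy$ follow from the observation that both products share the same off-diagonal entries and differ only in the diagonal trace terms $T(p,q')$ and $T(q,p')$, both of which vanish because $T(\iota_j(a),\iota_k(b))=0$ whenever $j\neq k$.

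Finally, the commutation statement is a corollary: since each $x,y\in B_\cA\setminus\{1,s_0\}$ is symmetric, $\overline{xy}=yx$, so $xy-yx\in\sks(\cA,\invol)=\FF s_0$; the explicit tabulation confirms $\deg(xy)=\deg(x)+\deg(y)$ in every case, hence $xy\in\FF s_0$ exactly when $\deg(x)+\deg(y)=g_0$, giving $xy=-yx$ there and $xy=yx$ elsewhere. The only real obstacle in the proof is the bookkeeping of many subcases; the conceptual content is entirely captured by the matching rules of Lemma \ref{lemmabasis}, which were designed precisely to absorb the shifts $g\mapsto g+a_j$ built into the lower-left entry of each $\alpha_{j,g}$ and $\alpha'_{j,h}$.
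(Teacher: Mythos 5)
Your proposal follows essentially the same approach as the paper: verify commutation via the symmetry of the basis elements under the involution, and then compute the product table directly from \eqref{matrstructproduct} using the cross-product rules for $\alb$, with the matching properties $(P_{11})$--$(P_{22})$ of Lemma~\ref{lemmabasis} absorbing the mismatched scalars in items (xi)--(xviii). The only small difference is a matter of presentation: the paper also explicitly records the relation $(\alpha_{j,g}s_0)\alpha_{j+1,g'}=-(\alpha_{j,g}\alpha_{j+1,g'})s_0$, which it uses to derive each of (xii), (xiv), (xvi), (xviii) from the preceding item; your argument covers the within-item $s_0$-companion equalities $(xs_0)(ys_0)=xy$ via the vanishing of the diagonal trace terms but leaves the cross-item $s_0$-shifts implicit, though they are equally immediate.
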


\begin{proof}
For the first assertion, observe that $x$ and $y$ are symmetric with respect to the involution, while $xy$ is symmetric if $\deg(x)+\deg(y)\neq g_0$ and skew otherwise.

Equations from i) to x) are easily checked.  For iv) and vii), we use the property $\sigma_j(h+a_j)=-\sigma_j(h)$, which is a consequence of  $\sigma(a_j,a_j)=-1$ and the multiplicativity of $\sigma$ in the second variable.

The first equation in all cases from xi) to xviii) is easy to check, too. Also note that $(\alpha_{j,g}s_0)\alpha_{j+1,g'}=-(\alpha_{j,g}\alpha_{j+1,g'})s_0$, so case xii) is a consequence of xi). Similarly, cases xiv), xvi) and xviii) are consequences of xiii), xv) and xvii), respectively. It remains to check the second equation for the cases xi), xiii), xv) and xvii).

In xi), equation $\alpha_{j,g}\alpha_{j+1,g'}=2\gamma(g,g')\alpha_{j+2,g+g'+a_{j+2}}$ can be established using property $(P_{11})$. Indeed, $\alpha_{j,g}\alpha_{j+1,g'}=\eta(2\iota_{j+2}(\bar x_{g+a_j} \bar x_{g'+a_{j+1}})) + \eta'(2\iota_{j+2}(\bar x_g \bar x_{g'}))=2\gamma(g,g')\alpha_{j+2,g+g'+a_{j+2}}$, because $\bar x_{g+a_j} \bar x_{g'+a_{j+1}}=\gamma(g+a_j,g'+a_{j+1})x_{g+g'+a_{j+2}}=\gamma(g,g')x_{g+g'+a_{j+2}}$ and $\bar x_g \bar x_{g'}=\gamma(g,g')x_{g+g'}$.

In xvii), we use property $(P_{22})$ to obtain $\alpha'_{j,h} \alpha'_{j+1,h'} = \eta(2 \iota_{j+2} (\bar x_{h+a_j} \bar x_{h'+a_{j+1}})) + \eta'(2\iota_{j+2}(- \sigma_j(h)\sigma_{j+1}(h') \bar x_h \bar x_{h'}))
= -2\gamma(h+a_j,h'+a_{j+1}) [\eta(\iota_{j+2}(-x_{h+h'+a_{j+2}})) + \eta'(\iota_{j+2}(x_{h+h'}))]
=-2\gamma(h+a_j,h'+a_{j+1}) \alpha_{j+2,h+h'+a_{j+2}}s_0 $.

Finally, by property $(P_{12})$, respectively ($P_{21}$), and using the fact $\sigma_j(h+a_j)=-\sigma_j(h)$, we can deduce with the same arguments as above that $\alpha_{j,g}\alpha'_{j+1,h}=2\bi\sigma_{j+1}(h)\gamma(g,h)\alpha'_{j+2,g+h+a_{j+2}}$ and
$\alpha'_{j,h}\alpha_{j+1,g}=2\bi\sigma_j(h)\gamma(h,g)\alpha'_{j+2,h+g+a_{j+2}}$. This completes cases xiii) and xv).
\end{proof}

Clearly, all products in Proposition \ref{constants} are either zero or have the correct degree to make \eqref{df:Diego_grad} a $\ZZ_4^3$-grading of the algebra $\cA$. Moreover, $\ZZ_4^3$ is the universal grading group.

\begin{corollary}
The grading given by \eqref{df:Diego_grad} restricts to a $\ZZ_4^2$-grading on the subalgebra spanned by $\{1,\veps_j,\alpha_{j,g}\;|\;g\in\supp\cQ \}$, which is isomorphic to $\HK\cong\Jord{M_4(\FF)}$.
\end{corollary}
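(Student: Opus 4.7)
The plan is threefold: establish closure of the span $\cS:=\lspan{1,\veps_j,\alpha_{j,g}}$ under multiplication, confirm that the grading restricts to $\ZZ_4^2$, and identify $\cS$ with $\HK$. Reading off Proposition \ref{constants}: item (i) gives $\veps_j\veps_k\in\lspan{1,\veps_\ell}$; items (iii) and (v) give $\veps_j\alpha_{k,g}\in\{0,-\alpha_{j,g+a_j}\}$; items (vi), (viii) and (xi) give
\[
\alpha_{j,g}\alpha_{k,g'}\in\FF\cdot 1+\FF\veps_j+2\gamma(g,g')\FF\alpha_{j+2,g+g'+a_{j+2}},
\]
the last case occurring only when $k=j+1$. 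Since $\supp\cQ$ is a subgroup of $\ZZ_4^3$, the new $\alpha$-index stays in $\supp\cQ$. Crucially, no such product involves $s_0$, $\veps'_j$, $\alpha_{k,g}s_0$ or $\alpha'_{k,h}$, so $\cS$ is a subalgebra. The prescribed degrees $0$, $a_j$, $b_j+g$ all have first coordinate $\bar 0$ under the embedding $\ZZ_2^3\hookrightarrow\ZZ_4^3$ fixed before \eqref{df:Diego_grad}, so $\supp(\cS)\subseteq\{\bar 0\}\times\ZZ_4^2$; the sixteen degrees are pairwise distinct, giving a $\ZZ_4^2$-grading on $\cS$ and $\dim\cS=16$.

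Because $g_0=(\bar 2,\bar 0,\bar 0)$ never arises as a sum of two degrees in $\supp(\cS)$, the first assertion of Proposition \ref{constants} shows $\cS$ is commutative and hence a Jordan algebra. I would identify it via its Peirce decomposition. The subalgebra $\lspan{1,\veps_1,\veps_2,\veps_3}\cong\FF[\ZZ_2^2]$ contains four primitive orthogonal idempotents
\[
e_i=\tfrac14\bigl(1+\epsilon_1^{(i)}\veps_1+\epsilon_2^{(i)}\veps_2+\epsilon_3^{(i)}\veps_3\bigr),\qquad i=1,\dots,4,
\]
with signs $\epsilon_j^{(i)}\in\{\pm 1\}$ of even parity, summing to $1$ and satisfying $\veps_j e_i=\epsilon_j^{(i)}e_i$. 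Items (iii) and (v) imply that each $e_i$ acts on every plane $\lspan{\alpha_{j,g},\alpha_{j,g+a_j}}$ with eigenvalues $0$ and $\tfrac12$, so $\cS_{ii}=\FF e_i$ and every $e_i$ is primitive in $\cS$. The twelve $\alpha$-vectors distribute among the six off-diagonal Peirce pieces: the $\veps_j$-eigenvectors $\alpha_{j,g}\pm\alpha_{j,g+a_j}$ lie in $\cS_{ik}$, where $(i,k)$ is the unique pair of indices with $\epsilon_j^{(i)}=\epsilon_j^{(k)}=\mp 1$, so each $\cS_{ik}$ is $2$-dimensional. By Jacobson's coordinatisation theorem, $\cS\cong H_4(\cD)$ for some unital $\FF$-algebra with involution $\cD$ of dimension $2$; inspecting the nonzero product $\alpha_{j,g}\alpha_{j+1,g'}=2\gamma(g,g')\alpha_{j+2,g+g'+a_{j+2}}$ forces $\cD$ to be split with exchange involution, giving $\cD\cong\cK$ and thus $\cS\cong H_4(\cK)=\HK$.

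The closure and grading steps are purely mechanical. The genuinely delicate point is the last one; should one prefer to sidestep Jacobson coordinatisation, an equivalent route is to produce explicit matrix units $E_{ik}\in\cS$ and verify the Jordan relations $E_{ii}\cdot E_{k\ell}=\tfrac12\delta_{ik}E_{i\ell}$ and $E_{ij}\cdot E_{ji}=\tfrac12(E_{ii}+E_{jj})$, which amounts to routine but somewhat laborious bookkeeping with the signs $\gamma(g,g')$ of \eqref{gammamatrix}.
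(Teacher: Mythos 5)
Your outline is sound and most of it is tight: closure of $\cS$ follows from items (i), (iii), (v), (vi), (viii), (xi) of Proposition~\ref{constants} (with item (xii) absorbed by commutativity), the degree bookkeeping is correct, and the identification of $16$ pairwise distinct degrees in $\{\bar 0\}\times\ZZ_4^2$ is right. The Peirce analysis is also correct: the four idempotents $e_i$ associated to the characters of $\lspan{1,\veps_1,\veps_2,\veps_3}\cong\FF[\ZZ_2^2]$ are orthogonal and primitive in $\cS$, and each off-diagonal piece $\cS_{ik}$ is $2$-dimensional and contains elements whose square is a nonzero multiple of $e_i+e_k$ (indeed $(\alpha_{j,g}-\epsilon\alpha_{j,g+a_j})^2=-32\epsilon(e_i+e_k)$), so the frame is strongly connected and Jacobson coordinatization applies with $\cD$ a $2$-dimensional commutative unital algebra with a nontrivial involution.

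The one genuinely under-justified step is exactly the one you flag as delicate: ``inspecting the nonzero product $\alpha_{j,g}\alpha_{j+1,g'}=2\gamma(g,g')\alpha_{j+2,g+g'+a_{j+2}}$ forces $\cD$ to be split with exchange involution.'' That clause does not, as written, rule out $\cD=\FF[t]/(t^2)$ with $\bar t=-t$, which is the only other $2$-dimensional possibility over an algebraically closed field; the off-diagonal Peirce spaces of $H_4(\FF[t]/(t^2))$ also contain elements with invertible square, and they do contain square-zero elements exactly as $H_4(\cK)$ does, so neither observation discriminates. The cleanest fix is to invoke semisimplicity of $\cS$ (which follows from the nondegenerate invariant trace form exactly as in Lemma~\ref{lm:trace_form_A}/Lemma~\ref{lm:subalgebra}, since $g_0\notin\{\bar0\}\times\ZZ_4^2$); a semisimple $H_4(\cD)$ forces $\cD$ semisimple, hence $\cD\cong\cK$ and $\cS\cong\HK$. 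Alternatively, and closer to what the paper intends, one can sidestep coordinatization entirely: once the frame of four primitive orthogonal idempotents and $\dim\cS=16$ are in hand, the classification of semisimple Jordan algebras leaves $\Jord{M_4(\FF)}$ as the only degree-$4$, $16$-dimensional option (this is exactly the enumeration argument the authors make in the proof of Theorem~\ref{th:recognitionZ4}); or simply match the structure constants of Proposition~\ref{constants} restricted to $\cS$ against the Pauli $\ZZ_4^2$-grading on $\HK\subset\HQ$ built in Section~\ref{s:H4}. The paper states the corollary without proof as an immediate consequence of Proposition~\ref{constants}; your Peirce/coordinatization route is heavier but legitimate once the last identification is properly closed.
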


\section{A recognition theorem}\label{s:recognition}

The goal of this section is to prove the following result:

\begin{theorem}\label{th:recognitionZ4}
Let $\cA$ be the Brown algebra over an algebraically closed field $\FF$, $\chr\FF\ne 2,3$. Then, up to equivalence, there is a unique $\ZZ_4^3$-grading on $\cA$ such that all nonzero homogeneous components have dimension $1$.
\end{theorem}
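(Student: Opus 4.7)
My plan is to reduce the uniqueness question to the known classification of fine $\ZZ_4^3$-gradings on the simple Lie algebra of type $E_6$ via the identification $\Der(\cA,\invol)\cong E_6$, and then lift the resulting equivalence back to $\cA$. For any such grading $\Gamma$, I would first observe that $1\in\cA_e$ together with $\dim\cA_g\le 1$ forces $\cA_e=\FF 1$, while the fact that the involution preserves each $\cA_g$ and $\sks(\cA,\invol)=\FF s$ is one-dimensional forces $s$ to be homogeneous of some degree $g_0$. From $s^2\in\FF^\times\cdot 1\subset\cA_e$ together with $s\notin\cA_e$ one concludes that $g_0$ is a nonzero element of order $2$ in $\ZZ_4^3$.

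Next I would view $\Gamma$ as coming from a quasitorus $Q\cong\bmu_4^3\subset\AAut(\cA,\invol)$ whose weight-space decomposition recovers $\Gamma$. Since all weight spaces are one-dimensional, any element centralizing $Q$ acts on each $\cA_g$ by a scalar, and multiplicativity with respect to the algebra product forces the family of scalars to be a character of $\ZZ_4^3$, hence an element of $Q$; thus $Q$ is a MAD in $\AAut(\cA,\invol)$. Its adjoint action on $\frg:=\Der(\cA,\invol)\cong E_6$ induces a $\ZZ_4^3$-grading on $\frg$, and a parallel argument (together with control of the kernel of the natural homomorphism $\AAut(\cA,\invol)\to\AAut(\frg)$) shows that the image of $Q$ in $\AAut(\frg)$ is again a MAD, so the induced grading on $E_6$ is fine with universal group $\ZZ_4^3$.

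By the classification of fine gradings on $E_6$ (Draper--Viruel \cite{DV_e6}; see also \cite[Ch.~6]{EKmon}, extending the results to characteristic $\ne 2,3$), there is up to equivalence a unique fine grading on $E_6$ with universal group $\ZZ_4^3$. Hence the image of $Q$ in $\AAut(\frg)$ is determined up to conjugacy. To conclude, I would use that $\AAut(\cA,\invol)\to\AAut(\frg)$ is surjective onto the relevant subgroup of equivalences: the connected component $\AAut(\cA,\invol)^\circ$ covers the adjoint $E_6$, and the outer diagram automorphism of $E_6$ is realized by any $\vphi\in\AAut(\cA,\invol)$ sending $s$ to $-s$. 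Lifting conjugacy of the MAD images back to $\AAut(\cA,\invol)$ then yields equivalence of the original gradings on $\cA$.

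The main obstacle I anticipate is in the passage to $E_6$: rigorously establishing that the induced grading on $\frg$ is fine (which requires controlling the kernel of $\AAut(\cA,\invol)\to\AAut(\frg)$) and that the final descent step works (which requires realizing the outer automorphism of $E_6$ by a concrete automorphism of $\cA$). Both points rely on the rigidity of the Brown algebra as an algebra with involution essentially determined by its derivation algebra together with its standard $56$-dimensional module structure.
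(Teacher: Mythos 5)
Your argument is genuinely different from the paper's, and while the reduction-to-$E_6$ idea is a natural one, there are real gaps that prevent it from being a complete proof of the theorem as stated.

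The paper's proof stays entirely inside $\cA$. It first shows that $s_0$ is homogeneous of degree $g_0$ of order $2$ and that the support is exactly $\{g\in\ZZ_4^3 \mid 2g\neq g_0\}$. It then picks a subgroup $H\cong\ZZ_4^2$ with $g_0\notin H$, and studies the graded Jordan subalgebra $\cB=\bigoplus_{h\in H}\cA_h$ (semisimple, degree $\le 4$, by Lemma~\ref{lm:subalgebra}) together with the structurable subalgebra $\cD=\cB\oplus s_0\cB$ of skew-dimension $1$ and dimension $32$ (Lemma~\ref{lm:subalgebra_bis}). Using Allison's classification of skew-dimension-$1$ structurable algebras and dimension counts in $\Der(\cA,\invol)\cong E_6$, it pins down $\cB\cong\Jord{M_4(\FF)}$ with its unique $\ZZ_4^2$-Pauli grading, and then explicitly normalizes the remaining homogeneous generators by means of norm isometries of $\alb$ (Proposition~\ref{prop:orbitsA}, Lemma~\ref{lemmarank1}) and automorphisms of $\cC$, landing exactly on the grading~\eqref{df:Diego_grad}. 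This is self-contained, works for $\chr\FF\ne 2,3$, and does not presuppose anything about gradings on $E_6$.

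Your proposal, on the other hand, runs into three concrete problems. (i) The classification of fine gradings on $E_6$ by Draper--Viruel is established in characteristic $0$, whereas the theorem is stated for $\chr\FF\ne 2,3$; obtaining the positive-characteristic case is precisely one of the things the intrinsic argument buys. (ii) The step you flag as "controlling the kernel of $\AAut(\cA,\invol)\to\AAut(\frg)$" hides a genuine issue: the kernel is \emph{not} trivial. Any map fixing $\FF 1\oplus\FF s_0$ and scaling $\eta(\alb)$ by $\lambda$ and $\eta'(\alb)$ by $\lambda^{-1}$ is an automorphism of $(\cA,\invol)$ precisely when $\lambda^3=1$ (since $\eta(x)\eta(y)=\eta'(x\times y)$), and it commutes with every derivation. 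So the kernel is a central $\bmu_3$ in the identity component. Because $\ZZ_4^3$ admits no nontrivial characters with values in $\bmu_3$, this $\bmu_3$ cannot sit inside your quasitorus $Q$; it is only the fact that $Q$ meets the non-identity component (and so fails to commute with $\bmu_3$) that prevents $Q\cdot\bmu_3$ from being a larger abelian diagonalizable group. One must still check that a MAD upstairs projects to a MAD downstairs, and that conjugacy of the images lifts back to conjugacy in $\AAut(\cA,\invol)$; neither is automatic in the presence of a nontrivial kernel, and you do not carry it out. (iii) Even granting all of this, the direction of deduction is backwards relative to the paper's whole point: Section~\ref{s:E} \emph{derives} the $\ZZ_4^3$-grading on $E_6$ from the uniqueness on $\cA$, so appealing to the $E_6$ classification to prove uniqueness on $\cA$ is circular in spirit.
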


To this end, we will need some general results about gradings on $\cA$ and the action of the group $\Aut(\cA,\invol)$, which contains an algebraic group of type $E_6$ as a subgroup of index $2$ (see \cite{G01}). The arguments in \cite{G01} also give that $\Der(\cA,\invol)$ is the simple Lie algebra of type $E_6$ (see also \cite{A79}).
We will use the model of $\cA$ described in Subsection \ref{ss:Brown_via_matr}. We assume that $\FF$ is algebraically closed and $\chr\FF\ne 2$, although some of the results do not require algebraic closure.

\subsection{Group gradings on $\cA$}

Recall from \eqref{df:trace_form_A} the trace form on $\cA$ and the bilinear form $\langle a,b \rangle=\tr(a\bar b)$.

\begin{lemma}\label{lm:trace_form_A}
The trace form on $\cA$ has the following properties:
\begin{itemize}
\item[i)] If $a^2=0$ and $\bar a=a$, then $\tr(a)=0$.
\item[ii)] $\langle a,b \rangle$ is a nondegenerate symmetric bilinear form.
\item[iii)] $\langle a,b \rangle$ is an invariant form: $\langle \bar a,\bar b\rangle=\langle a,b\rangle$ and $\langle ca,b\rangle=\langle a,\bar cb\rangle$.
\item[iv)] For any group grading $\cA=\bigoplus_{g\in G}\cA_g$, $gh\neq e$ implies $\langle \cA_g,\cA_h \rangle=0$.
\end{itemize}
\end{lemma}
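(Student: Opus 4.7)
The plan is to verify each part in turn, using the structurable matrix algebra model of $\cA$ from Subsection \ref{ss:Brown_via_matr}. For (i), I would work with a symmetric element $a = \matr{\alpha & x \\ x' & \alpha}$ of trace $2\alpha$ and expand $a^2 = 0$ via \eqref{matrstructproduct}, obtaining the system $\alpha^2 + T(x, x') = 0$, $\alpha x + (x')^\# = 0$, $\alpha x' + x^\# = 0$. Assuming $\alpha \ne 0$, I would solve $x = -\alpha^{-1}(x')^\#$ and use $(y^\#)^\# = N(y)\,y$ to deduce $(\alpha^3 + N(x'))\,x' = 0$: the case $x' = 0$ forces $x = 0$ and then $\alpha^2 = 0$, contradicting $\chr\FF \ne 2$, while the remaining case $N(x') = -\alpha^3$, combined with the standard identity $T(y, y^\#) = 3N(y)$, yields $T(x, x') = -\alpha^{-1}\cdot 3N(x') = 3\alpha^2$, contradicting $T(x, x') = -\alpha^2$ again by $\chr\FF \ne 2$. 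Hence $\alpha = 0$ and $\tr(a) = 0$.

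Parts (ii) and (iii) are essentially the content of Allison's construction recalled in the preliminaries: the form arising from an admissible triple is automatically symmetric, nondegenerate, and invariant. I would verify symmetry $\tr(a\bar b) = \tr(b\bar a)$ and involution-invariance $\langle\bar a, \bar b\rangle = \langle a, b\rangle$ by direct expansion of \eqref{matrstructproduct}; nondegeneracy by testing against the elements $1$, $s_0$, $\matr{0 & y \\ 0 & 0}$, $\matr{0 & 0 \\ y' & 0}$ and invoking the nondegeneracy of the trace pairing $T$; and module-invariance $\tr((ca)\bar b) = \tr(a(\bar b c))$ by a computation that reduces to the defining identities of the admissible triple.

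For (iv), since the involution preserves the grading, $a\bar b \in \cA_{gh}$, so the claim reduces to showing $\tr(\cA_k) = 0$ whenever $k \ne e$. I would realize the $G$-grading as the simultaneous eigenspace decomposition with respect to a diagonalizable quasitorus in $\Aut(\cA, \invol)$, available under the paper's standing hypotheses on $\FF$. Any such automorphism $\phi$ fixes $1$, and since invariant nondegenerate symmetric bilinear forms on the central simple $(\cA, \invol)$ are unique up to scalar while $\langle 1, 1\rangle = \tr(1) = 2$, the scalar factor is forced to be $1$; thus $\phi$ preserves $\langle\cdot, \cdot\rangle$ exactly and therefore $\tr = \langle\cdot, 1\rangle$. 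For $a \in \cA_g$ this yields $\tr(a) = \tr(\phi(a)) = \chi(g)\tr(a)$, where $\chi$ is the character of $\phi$; picking $\chi$ with $\chi(g) \ne 1$ (possible exactly when $g \ne e$) forces $\tr(a) = 0$. The main obstacle is part (i), where the specific sharp-map and generic-norm identities of the admissible triple must be combined carefully to squeeze out the contradiction; the remaining parts are either directly cited from the structurable-matrix-algebra theory or follow from the standard torus-invariance principle.
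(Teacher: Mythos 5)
Your proofs of (i), (ii), and (iii) follow the same route as the paper: for (i) you set up exactly the same system from $a^2=0$ and use $(y^\#)^\#=N(y)y$ together with $T(y,y^\#)=3N(y)$ to derive the contradiction $4\alpha^2=0$; for (ii) and (iii) you rely on the formulas for the trace form coming from the admissible triple. That all matches the paper.

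For (iv) you take a genuinely different route, and it is worth flagging the trade-offs. The paper shows directly that $\ker(\tr)=\FF s_0\oplus\ker(\id+L_{s_0}R_{s_0})$ is a graded subspace: $s_0$ is homogeneous because $\FF s_0=\sks(\cA,\invol)$ is graded, and $\ker(\id+L_{s_0}R_{s_0})$ is graded because it is defined by an operator built only from the homogeneous element $s_0$. Since $\cA=\FF 1\oplus\ker(\tr)$ with $1$ of degree $e$, this instantly gives $\cA_g\subset\ker(\tr)$ for all $g\ne e$, and hence $\langle\cA_g,\cA_h\rangle=\tr(\cA_{gh})=0$ for $gh\ne e$. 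This argument is elementary, needs nothing beyond $\chr\FF\ne 2$, and applies to an arbitrary grading group $G$. Your argument instead reduces to $\tr(\cA_k)=0$ for $k\ne e$ (correct), realizes the grading as a simultaneous eigenspace decomposition for a quasitorus in $\Aut(\cA,\invol)$, argues that each such automorphism preserves the trace, and then applies a character $\chi$ with $\chi(g)\ne 1$.

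That route has two gaps in the present setting. First, the realization of a $G$-grading as an eigenspace decomposition for diagonalizable automorphisms requires $\chr\FF=0$, or at least that $G$ have no $p$-torsion where $p=\chr\FF$; the paper works in $\chr\FF\ne 2,3$ and Lemma~\ref{lm:trace_form_A}(iv) is stated for an arbitrary group $G$, so your argument would silently fail for gradings by groups with $p$-torsion in positive characteristic. (It does happen to cover the $\ZZ_4^3$ case that Theorem~\ref{th:recognitionZ4} needs, but the Lemma is more general.) Second, the assertion that ``invariant nondegenerate symmetric bilinear forms on the central simple $(\cA,\invol)$ are unique up to scalar'' is used as a black box to conclude $\phi$ preserves $\tr$; this is true (a centroid/Schur argument for a central simple algebra), but it is not cited and it does require a proof, so as written there is a logical hole. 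The cleaner way to see that every automorphism preserves $\tr$ is exactly the paper's observation: an automorphism fixes $1$ and maps $s_0\mapsto\pm s_0$, hence preserves $\FF 1$ and $\FF s_0\oplus\ker(\id+L_{s_0}R_{s_0})=\ker(\tr)$ --- which then makes the character step unnecessary in the first place.
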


\begin{proof}
i) Since $\bar a=a$, we have $a=\matr{\alpha & x \\ x' & \alpha}$ and $\tr(a)=2\alpha$.  Moreover,
\begin{equation}\label{eq:square}
0=a^2= \begin{pmatrix} \alpha^2+T(x,x') & 2\alpha x +x'\times x' \\ 2\alpha x'+x\times x & \alpha^2+T(x,x') \end{pmatrix},
\end{equation}
so $\alpha^2+T(x,x')=0$, $\alpha x = -x'^\#$ and $\alpha x'=-x^\#$. In case $x=0$ or $x'=0$, we have $0=\tr(a^2)=2\alpha^2$, so $\alpha=0$ and hence $\tr(a)=0$. Now assume that $x\neq 0\ne x'$ but $\alpha\ne 0$. Since $(x^\#)^\#=N(x)x$ for any $x\in\alb$ (see e.g. \cite[Eq.(4)]{MC69}), we get $\alpha x=-x'^\#=-(-\alpha^{-1}x^\#)^\#=-\alpha^{-2}N(x)x$. Thus $-\alpha^3x=N(x)x$, and similarly $-\alpha^3x'=N(x')x'$, which implies $N(x)=N(x')=-\alpha^3\neq0$. But then $T(x,x')=T(x(-\alpha^{-1})x^\#)=-3\alpha^{-1}N(x)=3\alpha^2$ and $\alpha^2+T(x,x')=4\alpha^2\neq0$, which contradicts the equation $\alpha^2+T(x,x')=0$. Therefore, $\alpha=0$ and $\tr(a)=0$.

ii) Since $\tr$ is invariant under the involution, $\langle a,b\rangle=\tr(a\bar b)=\tr(\overline{a\bar b})=\tr(b\bar a)=\langle b,a\rangle$, so  $\langle\cdot,\cdot\rangle$ is symmetric. The nondegeneracy of the bilinear form $\langle\cdot,\cdot\rangle$ is a consequence of the nondegeneracy of the trace form $T$ of $\alb$.

iii) It is easy to see that $\tr(ab)=\tr(ba)$. Hence $\langle \bar a,\bar b\rangle=\tr(\bar ab)=\tr(b\bar a)=\tr(\overline{b \bar a})=\tr(a \bar b)=\langle a,b\rangle$. Using the fact that $T(x\times y,z)=N(x,y,z)$ is symmetric in the three variables, is is straightforward to check that $\langle ca,b\rangle=\langle a,\bar cb\rangle$.

iv) Observe that the restriction of $\tr$ to the subspace $\cA_0\bydef\FF s_0\oplus\ker(\id+L_{s_0}R_{s_0})$ is zero, and $\cA=\FF 1\oplus\cA_0$, so $\cA_0$ equals the kernel of $\tr$. Now, $\FF s_0$ is a graded subspace and $s_0^2=1$, hence $s_0$ is a homogeneous element and its degree has order at most $2$. It follows that $\cA_0$ is a graded subspace. Therefore, $\cA_g\subset\cA_0$ for any $g\ne e$. The result follows.
\end{proof}

\begin{lemma}\label{lm:subalgebra}
For any $G$-grading on $\cA$ and a subgroup $H\subset G$ such that $\deg(s_0)\notin H$, $\cB=\bigoplus_{h\in H}\cA_h$ is a semisimple Jordan algebra of degree $\le 4$.
\end{lemma}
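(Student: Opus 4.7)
The first step is to exploit the hypothesis on the skew-dimension. Since $\invol$ preserves each homogeneous component and $\sks(\cA,\invol)=\FF s_0$ is one-dimensional and concentrated in the single degree $\deg(s_0)$, every component $\cA_h$ with $h\ne\deg(s_0)$ lies inside $\sym(\cA,\invol)$. The hypothesis $\deg(s_0)\notin H$ thus gives $\cB\subseteq\sym(\cA,\invol)$, so $\invol$ restricts to the identity on $\cB$. Combined with closure of $\cB$ under the product of $\cA$, this forces commutativity: for $x,y\in\cB$ we have $xy\in\cB\subseteq\sym(\cA,\invol)$, and therefore $xy=\overline{xy}=\bar y\bar x=yx$.

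Next I would upgrade this commutative subalgebra to a Jordan algebra. The route I would take is to establish the (standard) fact that, for any structurable algebra $(\cA,\invol)$, the subspace $\sym(\cA,\invol)$ becomes a Jordan algebra under the symmetrised product $x\circ y:=\tfrac12(xy+yx)$; this follows from the structurable identity \eqref{struc} by specializing arguments to be symmetric (using, e.g., the fact that $V_{x,1}=L_x$ on symmetric $x$, and iterating). Because $\cB$ is commutative under the original product, the identity $xy=x\circ y$ holds on $\cB$, so $\cB$ is a unital subalgebra of $(\sym(\cA,\invol),\circ)$, and in particular a Jordan algebra.

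For semisimplicity I would invoke Lemma \ref{lm:trace_form_A}. Part (iv) gives $\langle\cA_h,\cA_{h'}\rangle=0$ whenever $hh'\ne e$, and since $H$ is a subgroup it is closed under inversion, so the restriction of the trace form to $\cB$ remains nondegenerate: each $\cA_h\subseteq\cB$ pairs nondegenerately with $\cA_{h^{-1}}\subseteq\cB$. A unital Jordan algebra carrying a nondegenerate symmetric invariant bilinear form has trivial Jacobson radical (any nilpotent ideal would lie in the radical of the form), so $\cB$ is semisimple. Finally, for the degree bound, I would invoke the Cayley--Dickson realization of Subsection \ref{ss:Brown_via_CD}: $\cA=\CD(\cJ,1)$ with $\cJ=\alb\times\FF$ separable Jordan of degree $4$. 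A direct calculation, using the generic norm of $\cA$ restricted to $\sym(\cA,\invol)=\cJ\oplus v\cJ_0$, shows that every symmetric element of $\cA$ satisfies a monic quartic polynomial; as a unital Jordan subalgebra of $(\sym(\cA,\invol),\circ)$, the algebra $\cB$ inherits $\deg\cB\le 4$.

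The main obstacle in this plan is the Jordan-identity step in the second paragraph: commutativity, semisimplicity, and the degree bound all become fairly formal consequences of the hypotheses once it is in place, but the concrete verification that $(\sym(\cA,\invol),\circ)$ satisfies the Jordan identity requires unfolding \eqref{struc} with some care (or citing the appropriate place in Allison's work on structurable algebras).
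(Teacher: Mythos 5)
Your reduction to the Jordan setting is fine (the paper gets there more directly: a $\invol$-invariant subalgebra of a structurable algebra is itself structurable, and a unital structurable algebra with trivial involution is a Jordan algebra), and you correctly transfer nondegeneracy of $\langle\cdot,\cdot\rangle$ to $\cB$ via Lemma~\ref{lm:trace_form_A}(iv). However, the semisimplicity step has a genuine gap. You claim that a unital Jordan algebra with a nondegenerate symmetric associative bilinear form has no nonzero nilpotent ideals because ``any nilpotent ideal would lie in the radical of the form.'' This is false for an \emph{arbitrary} such form: take $\cB=\FF[\epsilon]/(\epsilon^2)$ with the form $\langle 1,1\rangle=\langle\epsilon,\epsilon\rangle=0$, $\langle 1,\epsilon\rangle=1$. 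This form is symmetric, nondegenerate and associative, yet $\FF\epsilon$ is a $2$-nilpotent ideal not contained in its radical. What actually makes the paper's argument work is the extra input from Lemma~\ref{lm:trace_form_A}(i): if $a$ is symmetric and $a^2=0$ then $\tr(a)=0$. Given a $2$-nilpotent ideal $\cI\subset\cB$, for $a\in\cI$ and $b\in\cB$ one has $ab\in\cI$, so $(ab)^2=0$; part~(i) then gives $\langle a,b\rangle=\tr(ab)=0$, whence $\cI\perp\cB$ and $\cI=0$. Then Dieudonn\'e's lemma applies. You need this specific vanishing property of the Brown algebra's trace, not just nondegeneracy.

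The degree bound also needs more care in your formulation. You appeal to $\cB$ being a ``unital Jordan subalgebra of $(\sym(\cA,\invol),\circ)$,'' but $\sym(\cA,\invol)$ is not a subalgebra of $\cA$ (the Brown algebra is not commutative), and its status as a Jordan algebra under $\circ$ is not an off-the-shelf fact for a general structurable algebra of skew-dimension $1$. Likewise, ``a direct calculation\ldots\ shows that every symmetric element satisfies a monic quartic'' is asserted rather than carried out, and even if one produces such a quartic identity this only controls $\dim\FF[a]$, which must then be tied to the degree of the generic minimal polynomial of $\cB$. The paper circumvents all of this by citing \cite{AF92}: the conjugate norm of $\cA$ has degree $4$ and is a denominator for the (conjugate) inversion map, while the generic norm $N_\cB$ of the Jordan algebra $\cB$ is the \emph{minimal} such denominator on $\cB$; hence $N_\cB$ divides the restriction of the conjugate norm and $\deg N_\cB\le 4$. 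If you wanted to pursue your computational route you would need to actually exhibit the quartic and then argue degree monotonicity; the cited structural result is cleaner.
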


\begin{proof}
Since $\deg(s_0)\notin H$, the involution is trivial on $\cB$, so $\cB$ is a Jordan algebra. By Lemma \ref{lm:trace_form_A}(ii), the symmetric form $\langle\cdot,\cdot\rangle$ is nondegenerate on $\cA$. By (iv), the subspaces $\cA_g$ and $\cA_{g^{-1}}$ are paired by $\langle\cdot,\cdot\rangle$ for any $g\in G$. It follows that the restriction of $\langle\cdot,\cdot\rangle$ to $\cB$ is nondegenerate. Moreover, (iii) implies that this restriction is associative in the sense $\langle ab,c\rangle=\langle a,bc\rangle$ for all $a,b,c\in\cB$.

Suppose $\cI$ is an ideal of $\cB$ satisfying $\cI^2=0$. For any $a\in\cI$ and $b\in\cB$, we have $ab\in\cI$ and hence $(ab)^2=0$. By Lemma \ref{lm:trace_form_A}(i), this implies $\tr(ab)=0$. We have shown that $\langle\cI,\cB\rangle=0$, so $\cI=0$. By Dieudonn\'e's Lemma, we conclude that $\cB$ is a direct sum of simple ideals.

The conjugate norm of a structurable algebra was defined in \cite{AF92} as the exact denominator of the (conjugate) inversion map (i.e., the denominator of minimal degree), and it coincides with the generic norm in the case of a Jordan algebra. If $N_\cB$ is the generic norm of $\cB$, then it is the denominator of minimal degree for the inversion map, and therefore it divides any other denominator for the inversion map. Since the conjugate norm of $\cA$ has degree $4$, we conclude that the degree of $N_\cB$ is at most $4$.
\end{proof}

\begin{lemma}\label{lm:subalgebra_bis}
For any $G$-grading on $\cA$ and a subgroup $H\subset G$ such that $\deg(s_0)\in H$, $\cB=\bigoplus_{h\in H}\cA_h$ is a simple structurable algebra of skew-dimension $1$.
\end{lemma}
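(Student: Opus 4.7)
The first three assertions are essentially formal: since $H$ is a subgroup and the grading is $\sigma$-stable, $\cB=\bigoplus_{h\in H}\cA_h$ is a unital involution-closed subalgebra of $\cA$; the identity \eqref{struc} and skew-alternativity are multilinear, so they descend to $\cB$; and $s_0\in\cB$ together with $\sks(\cA)=\FF s_0$ forces $\sks(\cB)=\FF s_0$.

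The substantive step is simplicity. I plan to work in the structurable matrix algebra model of $\cA$ from Subsection \ref{ss:Brown_via_matr}, in which $s_0^2=1$, so the orthogonal idempotents $E_1=\frac{1}{2}(1+s_0)$ and $E_2=\frac{1}{2}(1-s_0)$ both lie in $\cB$. Skew-alternativity makes $L_{s_0}$ and $R_{s_0}$ commuting involutions of $\cA$ that preserve $\cB$, yielding a Peirce decomposition $\cA=\FF E_1\oplus J\oplus J'\oplus\FF E_2$ which restricts to $\cB$ as $\cB=\FF E_1\oplus(\cB\cap J)\oplus(\cB\cap J')\oplus\FF E_2$. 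A direct computation in this model shows that under $\langle\cdot,\cdot\rangle$ the four Peirce pieces are mutually orthogonal except for the pairings $\FF E_1\leftrightarrow\FF E_2$ and $J\leftrightarrow J'$, the latter realized by the generic trace $T$ of $\alb$. By Lemma \ref{lm:trace_form_A}(iv), the restriction of $\langle\cdot,\cdot\rangle$ to $\cB$ is nondegenerate, so $T$ pairs $\cB\cap J$ with $\cB\cap J'$ nondegenerately.

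Now let $\cI$ be a nonzero $\sigma$-invariant ideal of $\cB$. If $s_0\in\cI$ then $1=s_0^2\in\cI$ and $\cI=\cB$. Otherwise $\cI\subseteq\sym(\cB)$, and for each $a\in\cI$ the element $s_0 a+as_0$ is skew (because $\overline{s_0 a}=-as_0$) and lies in $\cI$, hence in $\sks(\cB)\cap\cI=0$, forcing $s_0 a=-as_0$. Translated to Peirce terms, every element of $\cI$ has vanishing $E_1$- and $E_2$-components, so $\cI\subseteq J\oplus J'$; the $E_1,E_2$-stability of $\cI$ then gives $\cI=(\cI\cap J)\oplus(\cI\cap J')$. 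A direct application of \eqref{matrstructproduct} shows that for $u\in\cB\cap J'$ and $v\in\cI\cap J$ one has $u\cdot v=T(v,u)E_2$; since $\cI\cap\FF E_2=0$, this scalar must vanish, so $T(\cI\cap J,\cB\cap J')=0$ and hence $\cI\cap J=0$ by nondegeneracy. Symmetrically $\cI\cap J'=0$, contradicting $\cI\ne 0$.

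The main obstacle is this simplicity step: the Dieudonn\'e-type argument used for the Jordan subalgebra in Lemma \ref{lm:subalgebra} no longer applies directly, since once $s_0\in\cB$ the subalgebra is genuinely structurable. The workaround is to promote $s_0$ to a Peirce-decomposing element and to confine any hypothetical proper $\sigma$-invariant ideal to the ``off-diagonal'' subspace $J\oplus J'$, where the nondegenerate pairing between $\cB\cap J$ and $\cB\cap J'$ leaves no room for it.
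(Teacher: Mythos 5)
Your proof is correct, and it takes a genuinely different route from the paper's. The paper first eliminates nilpotent ideals of $\cB$ (using the $\sigma$-invariant vs.\ non-invariant dichotomy and Lemma~\ref{lm:trace_form_A}(i)), then invokes Dieudonn\'e's Lemma with the nondegenerate associative form $(a\vert b)=\langle a,\bar b\rangle$ to decompose $\cB$ into simple ideals as a bare algebra, and finally sorts these ideals by the involution and uses $\dim\sks(\cB,\invol)=1$ together with $s_0^2=1$ to see there is only one. Your argument is more hands-on: you fix the structurable matrix algebra model, Peirce-decompose with respect to the commuting involutions $L_{s_0}$, $R_{s_0}$, show that any $\sigma$-invariant ideal $\cI$ with $s_0\notin\cI$ must lie in $J\oplus J'$, and then use the product formula $\eta'(u')\cdot\eta(x)=T(x,u')E_2$ together with the nondegenerate $T$-pairing of $\cB\cap J$ with $\cB\cap J'$ (a consequence of Lemma~\ref{lm:trace_form_A}(iv)) to kill $\cI$. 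The trade-off is that the paper's version is model-free (it would apply as is to a form of the Brown algebra over a non-closed field), whereas your Peirce argument commits to the structurable matrix realization --- harmless here, since $\FF$ is algebraically closed throughout Section~\ref{s:recognition} --- but in exchange avoids Dieudonn\'e's Lemma and the minimal-nilpotent-ideal reduction entirely, yielding a self-contained proof that any nonzero involution-stable ideal is all of~$\cB$.
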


\begin{proof}
If $\cI$ is an ideal of $\cB$ as an algebra with involution and $\cI^2=0$, then $s_0\notin \cI$, so $\cI$ is a Jordan algebra, and, as in the proof of Lemma \ref{lm:subalgebra}, we obtain $\cI=0$. On the other hand, if $\cI\ne 0$ is an ideal of $\cB$ as an algebra (disregarding involution),  $\cI^2=0$, and $\cI$ is of minimal dimension with this property, then either $\cI=\bar\cI$ or $\cI\cap\bar\cI=0$. In the first case, $\cI$ is an ideal of $\cB$ as an algebra with involution, so we get a contradiction. In the second case, $\cI\oplus\bar\cI$ is an ideal of $\cB$ as an algebra with involution and $(\cI\oplus\bar\cI)^2=0$, again a contradiction. The bilinear form $(a\vert b)\bydef\langle a,\bar b\rangle$ is symmetric, nondegenerate and associative on $\cA$, and hence on $\cB$. Therefore, Dieudonn\'e's Lemma applies and tells us that $\cB$ is a direct sum of simple ideals (as an algebra). The involution permutes these ideals so, adding each of them with its image under the involution, we write $\cB$ as a direct sum of ideals, each of which is simple as an algebra with involution. Since $\dim\sks(\cB,\invol)=1$, there is only one such ideal where the involution is not trivial, and it contains $s_0$. Since $s_0^2=1$, this ideal is the whole $\cB$.
\end{proof}

\subsection{Norm similarities of the Albert algebra}

A linear bijection $f\colon\alb\to\alb$ is called a {\em norm similarity with multiplier $\lambda$} if $N(f(x))=\lambda N(x)$ for all $x\in\alb$. Norm similarities with multiplier $1$ are called {\em (norm) isometries}. We will denote the group of norm similarities by  $M(\alb)$ and the group of isometries by $M_1(\alb)$.

For $f\in\text{End}(\alb)$, denote by $f^*$ the adjoint with respect to the trace form $T$ of $\alb$, i.e., $T(f(x),y)=T(x,f^*(y))$ for all $x,y\in\alb$. Following the notation of \cite{G01}, for any element $\varphi\in M(\alb)$, we denote the element $(\varphi^*)^{-1}=(\varphi^{-1})^*$ by $\varphi^\dagger$, so we have $T(\varphi(x),\varphi^\dagger(y))=T(x,y)$ for all $x,y\in\alb$. If the multiplier of $\varphi$ is $\lambda$, then $\varphi^\dagger$ is a norm similarity with multiplier $\lambda^{-1}$, and also
\begin{equation}\label{eq:dagger}
\varphi(x)\times\varphi(y)=\lambda\varphi^\dagger(x\times y) \quad \text{and} \quad \varphi^\dagger(x)\times\varphi^\dagger(y)=\lambda^{-1}\varphi(x\times y)
\end{equation}
for all $x,y\in\alb$ (see \cite[Lemma~1.7]{G01}). The $U$-operator $U_x(y)\bydef\{x,y,x\}=2x(xy)-x^2y$ can also be written as $U_x(y)=T(x,y)x-x^\#\times y$ (see \cite[Theorem~1]{MC70}; cf. \cite[Theorem~1]{MC69}). Therefore, $U_{\varphi(x)}\varphi^\dagger(y)=U_x(y)$ for any $\varphi\in M(\alb)$ and $x,y\in\alb$. It follows that the automorphisms of the Albert algebra are precisely the elements $\varphi\in M_1(\alb)$ such that $\varphi^\dagger=\varphi$. Moreover, any $\varphi\in M_1(\alb)$ defines an automorphism of the Brown algebra $\cA$ given by
\begin{equation*}
\begin{pmatrix} \alpha & x \\ x' & \beta \end{pmatrix} \mapsto
\begin{pmatrix} \alpha & \varphi(x) \\ \varphi^\dagger(x') & \beta \end{pmatrix}.
\end{equation*}
Thus we can identify $M_1(\alb)$ with a subgroup of $\Aut(\cA,\invol)$. In fact, this subgroup is precisely the stabilizer of the element $s_0$.

For $\lambda_1,\lambda_2,\lambda_3\in \FF^\times$ and $\mu_i=\lambda^{-1}_i\lambda_{i+1}\lambda_{i+2}$, we can define a norm similarity $c_{\lambda_1,\lambda_2,\lambda_3}$, with multiplier $\lambda_1\lambda_2\lambda_3$, given by $\iota_i(x)\mapsto\iota_i(\lambda_i x)$, $E_i\mapsto\mu_i E_i$. Note that $c^\dagger_{\lambda_1,\lambda_2,\lambda_3}$ is given by $\iota_i(x)\mapsto\iota_i(\lambda_i^{-1} x)$, $E_i\mapsto\mu_i^{-1} E_i$. (These norm similarities appear e.g. in \cite[Eq.~(1.6)]{G01}.)
For $\lambda\in\FF^\times$, denote $c_\lambda:=c_{\lambda,\lambda,\lambda}$.

\begin{df}\label{deforbits}
We define the {\it rank} of $x\in\alb$ by $\rank(x)\bydef\dim(\im U_x)$ and denote $\cO_n\bydef\{x\in \alb\;|\;\rank(x)=n\}$.
\end{df}

Since $U_{\varphi(x)}\varphi^\dagger(y)=U_x(y)$ for $\varphi\in M(\alb)$, the rank is invariant under the action of $M(\alb)$.
Actually, we will show now that the rank of an element $x\in\alb$ determines its $M(\alb)$-orbit. Denote by $\mu_x(X)$ the minimal polynomial of $x$; it is a divisor of the generic minimal polynomial $m_x(X)=X^3-T(x)X^2+S(x)X-N(x)$.

\begin{lemma}[{\cite[Exercise 6, p.393]{J68}}]\label{lm:transitivity_M1}
$M_1(\alb)$ is transitive on the set of elements of generic norm 1.\qed
\end{lemma}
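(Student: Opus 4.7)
My plan hinges on the identity $N(U_y(z)) = N(y)^2 N(z)$, which makes $U_y$ a norm similarity with multiplier $N(y)^2$. Given $x\in\alb$ with $N(x)=1$, if I can find $y\in\alb$ with $y^2=x$ and $N(y)=1$, then $U_y\in M_1(\alb)$ and
\[
U_y(1) = 2y(y\cdot 1) - y^2\cdot 1 = y^2 = x,
\]
exhibiting an isometry sending $1$ to $x$. Transitivity then follows at once: if $\varphi_i\in M_1(\alb)$ maps $1\mapsto x_i$ for $i=1,2$, then $\varphi_2\varphi_1^{-1}$ maps $x_1\mapsto x_2$.

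The first step is to produce such a square root. The subalgebra $\FF[x]\subseteq\alb$ generated by $x$ under the Jordan product is commutative and associative, being spanned by $\{1,x,x^2\}$ in view of the cubic $m_x(X)=X^3-T(x)X^2+S(x)X-1$ satisfied by $x$. Since $\FF$ is algebraically closed of characteristic $\ne 2$, the algebra $\FF[x]\cong\FF[X]/(m_x(X))$ decomposes as a direct product of local Artinian $\FF$-algebras, each of the form $\FF\oplus\frn$ with $\frn$ nilpotent; an invertible element of such a local algebra admits a square root via a terminating binomial expansion, so a desired $y\in\FF[x]$ with $y^2=x$ exists. To normalize, I would invoke the identity $N(y^2)=N(y)^2$, which is polynomial in $y$ and holds trivially for diagonal elements $y=\sum_i\lambda_i E_i$ (both sides equal $\prod_i\lambda_i^2$), hence holds on all of $\alb$ by Zariski density. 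This gives $N(y)=\pm 1$, and replacing $y$ by $-y$ if necessary (which preserves $y^2$ while negating the cubic form $N(y)$) yields $N(y)=1$.

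The main technical input is the norm-similarity formula $N(U_y(z))=N(y)^2 N(z)$, which I would derive from the fundamental Jordan identity $U_{U_y(z)}=U_y U_z U_y$ together with the same diagonal-plus-density reduction (for diagonal $y$ and $z$ inside the Peirce subalgebra $\FF E_1\oplus\FF E_2\oplus\FF E_3$, $U_y(z)$ reduces to the entry-wise product $yzy$ and the identity becomes $\det(yzy)=\det(y)^2\det(z)$). The most delicate point to audit is the square-root existence when $m_x(X)$ has repeated roots (one triple root, or one simple plus one double root), but the Artinian product decomposition handles all possibilities uniformly, so no separate case analysis is needed.
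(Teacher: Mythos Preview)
The paper does not give its own proof of this lemma; it simply cites Jacobson with a \qed. Your overall strategy---construct a square root $y$ of $x$ inside the associative subalgebra $\FF[x]$, normalize so that $N(y)=1$, and then use $U_y\in M_1(\alb)$ with $U_y(1)=y^2=x$---is the standard one and is essentially what Jacobson's exercise asks for. The square-root construction and the reduction of transitivity to ``every norm-$1$ element is in the orbit of $1$'' are fine.

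The genuine gap is your justification of the identities $N(y^2)=N(y)^2$ and $N(U_y z)=N(y)^2 N(z)$ by ``Zariski density of diagonal elements''. The diagonal elements $\sum_i\lambda_i E_i$ form a $3$-dimensional linear subspace of the $27$-dimensional $\alb$; they are nowhere near Zariski dense, so checking a polynomial identity there tells you nothing about generic $y$ (and likewise diagonal pairs $(y,z)$ are not dense in $\alb\times\alb$). The identities themselves are correct---indeed $N(U_y z)=N(y)^2 N(z)$ is a basic fact of cubic norm structures, see e.g.\ \cite{MC69}---but your argument does not prove them. One valid repair for the first identity: the set of $y$ whose minimal polynomial equals $m_y$ and has three distinct roots is Zariski open and nonempty, and every such $y$ is carried to a diagonal element by some $\vphi\in\Aut(\alb)$ (by \cite[Chapter~IX, Theorem~10]{J68}); since automorphisms preserve $N$ and commute with squaring, the identity for diagonal $y$ transports to all such $y$, hence to all $y$ by density. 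For the second identity, with $y$ diagonal one checks directly that $U_y$ multiplies each Peirce component by the appropriate scalar, and a short computation with the explicit formula for $N$ gives $N(U_y z)=(\lambda_1\lambda_2\lambda_3)^2 N(z)$ for \emph{all} $z$; then the same diagonalizable-density argument in the $y$-variable (with $z$ fixed) finishes it. Alternatively, just cite \cite{MC69} for the $U$-operator norm identity and avoid the density argument entirely.
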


\begin{proposition}\label{prop:orbitsA}
The orbits for the action of $M(\alb)$ on $\alb$ are exactly $\cO_0=\{0\}$, $\cO_1$, $\cO_{10}$ and $\cO_{27}$. The orbit $\cO_{27}$ consists of all nonisotropic elements: $x\in\alb$ with $N(x)\ne 0$. The orbit $\cO_1$ consists of all $0\neq x\in\alb$ satisfying $N(x)=0$, $S(x)=0$ and $\deg\mu_x=2$; in this case $\mu_x(X)=X^2-T(x)X$.
\end{proposition}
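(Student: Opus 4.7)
The plan is to reduce the orbit classification to (a) confirming that rank is $M(\alb)$-invariant, (b) producing intrinsic descriptions of $\cO_{27}$ and $\cO_1$, and (c) transitivity on each nonempty $\cO_n$. Invariance of rank is immediate: the identity $U_{\varphi(x)}\varphi^\dagger(y) = U_x(y)$ already noted in the text rewrites as $U_{\varphi(x)} = U_x\circ(\varphi^\dagger)^{-1}$, whence $\rank U_{\varphi(x)} = \rank U_x$. So each $\cO_n$ is a union of $M(\alb)$-orbits, and in particular $\cO_0=\{0\}$ is trivially a single orbit.

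For $\cO_{27}$, I would invoke the standard Jordan-theoretic fact that $U_x$ is invertible iff $x$ is invertible: the forward direction uses $U_x U_{x^{-1}} = \id$ and the backward direction uses $x^2 = U_x(1)$ together with the fact that $x^2$ invertible forces $x$ invertible in $\alb$. Invertibility in $\alb$ is equivalent to $N(x)\ne 0$, so $\cO_{27} = \{x\in\alb : N(x)\ne 0\}$. Transitivity is then easy: given such $x$, algebraic closure of $\FF$ lets me choose $\lambda$ with $\lambda^3 N(x) = 1$; since $c_\lambda$ has multiplier $\lambda^3$, we have $N(c_\lambda(x))=1$, and Lemma~\ref{lm:transitivity_M1} conjugates $c_\lambda(x)$ to any preassigned element of norm $1$.

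For the intrinsic characterization of $\cO_1$, the key is the formula $U_x(y) = T(x,y)x - x^\#\times y$. If $x^\# = 0$, then $\im U_x\subseteq \FF x$, and nondegeneracy of $T$ gives $\im U_x=\FF x$ whenever $x\ne 0$, so $\rank(x)=1$. Writing $x=\sum_i(\alpha_i E_i+\iota_i(a_i))$ and using the Peirce-component formula $(x^\#)_{E_i}=\alpha_{i+1}\alpha_{i+2}-4n(a_i)$, the condition $x^\#=0$ forces $\alpha_{i+1}\alpha_{i+2}=4n(a_i)$ for each $i$, which immediately gives
\[
S(x)=\sum_{i<j}\alpha_i\alpha_j-4\sum_i n(a_i)=0,
\]
and the identity $(x^\#)^\#=N(x)x$ then forces $N(x)=0$. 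Consequently $x^2=T(x)x-S(x)\cdot 1=T(x)x$, and since $x\ne 0$ and $x\ne T(x)\cdot 1$ (the latter because $1^\#=1\ne 0$), the minimal polynomial is exactly $\mu_x(X)=X^2-T(x)X$, of degree $2$. The converse is direct: the stated conditions yield $x^2=T(x)x$, hence $x^\#=0$, hence $\rank(x)=1$.

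The transitivity of $M(\alb)$ on $\cO_1$ and $\cO_{10}$ is the hardest part, and is where I expect the main obstacle. For $\cO_1$, when $T(x)\ne 0$ the element $e=x/T(x)$ is a primitive idempotent, so the classical transitivity of $\Aut(\alb)=F_4$ on primitive idempotents of the Albert algebra (see, e.g., \cite[Ch.~IX]{J68}), combined with the scalings $c_{\lambda,1,1}$, reduces $x$ to $E_1$; but when $T(x)=0$, $x$ is a rank-$1$ nilpotent ($x^2=0$) and must be connected to the idempotent case by a norm similarity that does \emph{not} preserve trace, for instance an invertible $U$-operator $U_y\in M(\alb)$ (which lies in $M(\alb)$ since $N(U_y(\cdot))=N(y)^2 N(\cdot)$) suitably chosen. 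Producing such a similarity explicitly is the technical heart of the argument; alternatively one appeals to the fact that $M(\alb)^\circ$ is reductive of type $E_6$ acting on its $27$-dimensional minuscule representation, whose orbit structure is classical. Once $\cO_0,\cO_1,\cO_{27}$ are pinned down, a direct computation shows $\rank(E_1+E_2)=10$ with $\im U_{E_1+E_2}=\FF E_1\oplus \FF E_2\oplus \iota_3(\cC)$, and a parallel transitivity argument using the Peirce frame of two orthogonal primitive idempotents shows every $x$ with $x^\#\ne 0$ and $N(x)=0$ lies in the $M(\alb)$-orbit of $E_1+E_2$, thus ruling out any further rank values.
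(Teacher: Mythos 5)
Your broad architecture matches the paper's: use $U_{\varphi(x)}\varphi^\dagger = U_x$ to make rank an $M(\alb)$-invariant, handle $\cO_0$ and $\cO_{27}$ via Lemma~\ref{lm:transitivity_M1} and the scalings $c_\lambda$, and then characterize $\cO_1$ through the vanishing of $x^\#$. Those parts are sound, and the identity-based derivation of $S(x)=0$, $N(x)=0$, $\mu_x=X^2-T(x)X$ from $x^\#=0$ is correct and is essentially the content that the paper packages into Remark~\ref{remarkrank1}.

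The gap is precisely where you flag it, and it is not a cosmetic one. To conclude that $\cO_1$ and $\cO_{10}$ are \emph{single} orbits you must move between the trace-zero (nilpotent) and trace-nonzero (idempotent-up-to-scale) loci within each rank class, and you leave this as ``the technical heart of the argument,'' gesturing either at a suitable $U_y$ or at the ``classical'' orbit picture of the $E_6$ minuscule module. The first suggestion is undeveloped (no such $U_y$ is exhibited), and the second is essentially a citation of the result being proved. The paper closes exactly this gap by a case analysis on $(\mu_x,m_x)$ using \cite[Ch.~IX, Thm.~10]{J68} to control the $\Aut(\alb)$-orbit, and then exhibiting explicit similarities $c_{\alpha,\beta,\gamma}$ (e.g. $c_{\bi,1,\bi}$ sends the rank-one nilpotent $2E_1-2E_2+\iota_3(\bi)$ to a rank-one element with nonzero trace) to glue the different $\Aut(\alb)$-orbits into two $M(\alb)$-orbits. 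A second, smaller gap: your argument establishes $\{x\ne 0:x^\#=0\}\subseteq\cO_1$ but not the reverse inclusion, i.e., that $\rank(x)=1$ forces $x^\#=0$; in the paper this direction falls out of the completed orbit classification, so until your transitivity step is filled in, your intrinsic description of $\cO_1$ remains one-sided.
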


\begin{proof}
It is clear that $\cO_0$ is an orbit. $\cO_{27}$ consists of all elements that are invertible in the Jordan sense, which are precisely the nonisotropic elements. Hence $\cO_{27}$ is an orbit by Lemma \ref{lm:transitivity_M1}. It remains to consider the orbits of the isotropic nonzero elements.

By \cite[Chapter~IX, Theorem~10]{J68}, two elements of $\alb$ are in the same $\Aut(\alb)$-orbit if and only if they have the same minimal polynomial and the same generic minimal polynomial. Take $0\neq x\in\alb$ with $N(x)=0$ and consider two possible cases: $\mu_x=m_x$ (i.e., $\deg\mu_x=3$) and $\mu_x\neq m_x$ (i.e., $\deg\mu_x(X)=2$). Since $N(x)=0$, we have $X|m_x(X)$, but $\mu_x(X)$ and $m_x(X)$ have the same irreducible factors by \cite[Chapter~VI, Theorem~1]{J68}, so $X|\mu_x(X)$, too. Thus, if $\deg\mu_x=2$ then $\mu_x(X)=X^2+\lambda X$, and, by the same result, either $m_x(X)=X^2(X+\lambda)$ or $m_x(X)=X(X+\lambda)^2$.

1) Case $\deg\mu_x=3$. Then $\mu_x(X)=X^3+\lambda X^2+\mu X$.
\begin{itemize}
\item If $\lambda=0\neq\mu$, then $x_0=\sqrt{\mu}\bi(E_2-E_3)$ is a representative of the $\Aut(\alb)$-orbit of $x$. By applying an appropriate norm similarity $c_{\alpha,\beta,\gamma}$ to $x_0$, we see that it is in the $M(\alb)$-orbit of $\widetilde{E}:=E_2+E_3$.

\item If $\lambda=\mu=0$, then $x_0=\iota_2(1)+\iota_3(\bi)$ is a representative of the $\Aut(\alb)$-orbit of $x$. But using an appropriate norm similarity  $c_{\alpha,\beta,\gamma}$, we see that $x_0$ is in the $M(\alb)$-orbit of $\iota_2(1)+\iota_3(1)$ ($\lambda=0$ and  $\mu=-8$), which is the orbit of $\widetilde{E}$ by the previous case.

\item If $\lambda\neq0\neq\mu$, we can find $\alpha,\beta\in \FF^\times$ such that $\alpha+\beta=-\lambda$ and $\alpha\beta=\mu$.
If $\alpha\neq\beta$, then $\alpha E_2+\beta E_3$ is a representantive of the  $\Aut(\alb)$-orbit of $x$, and it is the $M(\alb)$-orbit of $\widetilde{E}$. If $\alpha=\beta$, then applying $c_{\alpha^{-1}}$ we may assume $\lambda=-2$ and $\mu=1$, so $x_0=2E_1+\frac{\bi}{2}\iota_2(1)$ is a representative of the $\Aut(\alb)$-orbit of $x$. Applying now $c_{2\sqrt{2},\sqrt{2},2}$, we move $x_0$ to the element $2E_1+\frac{\bi}{\sqrt{2}}\iota_2(1)$, which is in the $\Aut(\alb)$-orbit of $(1+\bi)E_2+(1-\bi)E_3$ ($\lambda=-2$ and $\mu=2$), and therefore in the $M(\alb)$-orbit of $\widetilde{E}$.

\item If $\lambda\ne 0=\mu$, then $x_0=-\lambda(E_1+\iota_2(1)+\iota_3(\bi))$ is a representative of the $\Aut(\alb)$-orbit of $x$. Applying $c_{-\lambda^{-1},-\lambda^{-1},\lambda^{-1}\bi}$, we move $x_0$ to the element $-\bi E_1+\iota_2(1)+\iota_3(1)$, which has $\lambda=\bi$ and $\mu=-8$, and hence belongs to the $M(\alb)$-orbit of $\widetilde{E}$ by the previous case.
\end{itemize}

2) Case $\deg\mu_x(X)=2$. Then $\mu_x(X)=X^2+\lambda X$.
\begin{itemize}
\item If $\lambda\neq0$, then in the case $m_x(X)=X^2(X+\lambda)$, the element $-\lambda E_1$ is a representative of the $\Aut(\alb)$-orbit of $x$, whereas in the case $m_x(X)=X(X+\lambda)^2$, the element $-\lambda(E_2+E_3)$ is a representative of the  $\Aut(\alb)$-orbit of $x$. Clearly, these elements are in the $M(\alb)$-orbits of $E_1$ and $\widetilde{E}$, respectively.
\item If $\lambda=0$ then $m_x(X)=X^3$ and $x_0=2E_1-2E_2+\iota_3(\bi)$ is a representative of the  $\Aut(\alb)$-orbit of $x$. But using $c_{\bi,1,\bi}$, we see that $x_0$ is in the $M(\alb)$-orbit of the element $2E_1+2E_2-\iota_3(1)$, whose minimal polynomial is  $X^2-4X$, so it falls under the previous case.
\end{itemize}

We conclude that the only nontrivial isotropic orbits are the ones of $\widetilde{E}$ and $E_1$. Since $\rank(E_1)=1$ and $\rank(\widetilde{E})=10$, these orbits are different. The characterization of $\cO_1$ follows from the cases considered above.
\end{proof}

\begin{remark} \label{remarkrank1}
For any nonzero isotropic element $x\in\alb$, we have $x\in\cO_1$ if and only if $x^\#=0$ (and therefore, $x\in\cO_{10}$ if and only if $x^\#\neq0$). Indeed, if $x\in\cO_1$, then $x^\#=x^2-T(x)x=0$ by Proposition~\ref{prop:orbitsA}. Conversely, if $x^\#=0$, then $\deg\mu_x=2$ and, since $S(x)=T(x^\#)$, we also have $S(x)=0$, hence $x\in\cO_1$ by Proposition~\ref{prop:orbitsA}. Note that in \cite{J68}, the elements of rank $1$ are defined as the elements $x\ne 0$ such that $x^\#=0$ (see p.364), which is equivalent to our definition.
\end{remark}

\begin{corollary}\label{cor:orbitsA}
The orbits for the action of $M_1(\alb)$ on $\alb$ are $\cO_0$, $\cO_1$, $\cO_{10}$ and $\cO_{27}(\lambda)\bydef\{x\in\cO_{27}\;|\; N(x)=\lambda\}$, $\lambda\in\FF^\times$.
\end{corollary}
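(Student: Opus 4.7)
The plan is to refine the $M(\alb)$-orbits described in Proposition~\ref{prop:orbitsA} down to $M_1(\alb)$-orbits. Since $M_1(\alb)$ is a subgroup of $M(\alb)$, each of $\cO_0$, $\cO_1$, $\cO_{10}$, $\cO_{27}$ is a disjoint union of $M_1(\alb)$-orbits. The generic norm $N$ is $M_1(\alb)$-invariant, so $\cO_{27}$ splits at least into the pieces $\cO_{27}(\lambda)$ indexed by $\lambda\in\FF^\times$; the remaining task is to check that each $\cO_{27}(\lambda)$ is a single $M_1(\alb)$-orbit and that $\cO_1$ and $\cO_{10}$ are not broken further.

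First I would handle $\cO_{27}(\lambda)$ directly from Lemma~\ref{lm:transitivity_M1}. Given $x,y\in\cO_{27}(\lambda)$, pick $\mu\in\FF$ with $\mu^3\lambda=1$ (possible since $\FF$ is algebraically closed); then $N(\mu x)=N(\mu y)=1$, so there exists $\varphi\in M_1(\alb)$ with $\varphi(\mu x)=\mu y$, and linearity gives $\varphi(x)=y$.

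For the isotropic orbits I would use the observation that $M(\alb)=M_1(\alb)\cdot\{c_\lambda\;|\;\lambda\in\FF^\times\}$: any $\psi\in M(\alb)$ of multiplier $\mu$ factors as $\psi=c_\lambda\circ(c_\lambda^{-1}\circ\psi)$ with $\lambda^3=\mu$, and the second factor is an isometry. Since $c_\lambda$ acts on $\alb$ as the scalar $\lambda$, the $M(\alb)$-orbit of $x$ is exactly $\bigcup_{\lambda\in\FF^\times} M_1(\alb)\cdot(\lambda x)$. So it suffices to produce, for each $\lambda\in\FF^\times$ and each representative $x_0\in\{E_1,\,E_2+E_3\}$, an element of $M_1(\alb)$ sending $x_0$ to $\lambda x_0$. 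I would accomplish this with the diagonal similarities $c_{\lambda_1,\lambda_2,\lambda_3}$ (multiplier $\lambda_1\lambda_2\lambda_3$, acting on $E_i$ by $\mu_i=\lambda_i^{-1}\lambda_{i+1}\lambda_{i+2}$): imposing $\lambda_1\lambda_2\lambda_3=1$ together with the desired scaling of $E_1$, or of $E_2+E_3$, yields a small system which is solvable in $\FF$ by extracting square roots. Combining this with Proposition~\ref{prop:orbitsA} gives that $\cO_1$ and $\cO_{10}$ are each a single $M_1(\alb)$-orbit.

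The only subtlety to watch is that the solvability of these auxiliary systems genuinely uses the algebraic closure of $\FF$ (cube roots for the $\cO_{27}(\lambda)$ step, square roots for the isotropic cases); over a smaller field the statement of the corollary would have to be modified accordingly. Apart from this, no real obstacle is expected, as everything reduces to Proposition~\ref{prop:orbitsA}, Lemma~\ref{lm:transitivity_M1}, and a routine verification with the explicit similarities $c_{\lambda_1,\lambda_2,\lambda_3}$.
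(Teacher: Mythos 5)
Your argument is correct and follows essentially the same route as the paper's proof: scale $\cO_{27}(\lambda)$ representatives to norm $1$ and invoke Lemma~\ref{lm:transitivity_M1}, and rescale $E_1$ and $E_2+E_3$ by arbitrary $\lambda\in\FF^\times$ using diagonal isometries $c_{\lambda_1,\lambda_2,\lambda_3}$ with $\lambda_1\lambda_2\lambda_3=1$. You merely spell out more explicitly the factorization $M(\alb)=M_1(\alb)\cdot\{c_\lambda\}$ that the paper's terse proof leaves implicit.
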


\begin{proof}
Note that the elements $E_1$ and $E_2+E_3$ can be scaled by any $\lambda\in\FF^\times$ using some norm similarity $c_{\alpha,\beta,\gamma}$ with $\alpha\beta\gamma=1$. Therefore, $\cO_1$ and $\cO_{10}$ are orbits for $M_1(\alb)$, too. The fact that $\cO_{27}(\lambda)$ is an orbit for $M_1(\alb)$ follows from Lemma \ref{lm:transitivity_M1}.
\end{proof}

\begin{lemma}\label{lemmarank1}\rm The rank function on $\alb$ has the following properties:
\begin{itemize}
\item[i)] If $x,y\in\alb$ have rank $1$, then $N(x+y)=0$.
\item[ii)] If $x_1,x_2,x_3\in\alb$ have rank $1$ and $N(x_1+x_2+x_3)\ne 0$, then $x_i+x_j$ has rank $10$ for each $i\neq j$.
\item[iii)] If $x_1,x_2,x_3\in\alb$ have rank $1$ and $N(x_1+x_2+x_3)=1$, then there is an isometry sending $x_i$ to $E_i$ for all $i$.
\item[iv)] If $\rank(x)=1$, then $\rank(x^\#)=0$. If $\rank(x)=10$, then $\rank(x^\#)=1$. If $\rank(x)=27$, then $\rank(x^\#)=27$. In general, $\rank(x^\#)\leq\rank(x)$.
\end{itemize}
\end{lemma}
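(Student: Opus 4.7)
The plan is to polarize the cubic norm $N$ and, for part (iii), derive a polarized adjoint identity that forces the $x_i$ to be a Jordan frame; part (iv) is then a straightforward case analysis. For (i), the polarization $N(x+y) = N(x) + T(x^\#, y) + T(x, y^\#) + N(y)$ kills every term when both $x,y$ have rank $1$, since then $N(x) = N(y) = 0$ and $x^\# = y^\# = 0$ by Remark~\ref{remarkrank1} and Proposition~\ref{prop:orbitsA}. For (ii), set $s = x_1+x_2+x_3$ and observe that $x_i + x_j = 0$ would force $s = \pm x_k$, hence $N(s) = 0$, contrary to hypothesis; so $x_i + x_j \ne 0$, and by (i) it is isotropic. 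Since $(x_i + x_j)^\# = x_i^\# + x_j^\# + x_i\times x_j = x_i\times x_j$, Remark~\ref{remarkrank1} reduces the claim to $x_i \times x_j \ne 0$. Fully polarizing $N(s)$ and using $x_i^\# = 0$, $N(x_i) = 0$ collapses $N(s)$ to the single trilinear term $N(x_1, x_2, x_3) = T(x_i \times x_j, x_k)$, which is nonzero by assumption.

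For (iii) I would first apply Lemma~\ref{lm:transitivity_M1} to obtain $\varphi \in M_1(\alb)$ with $\varphi(s) = 1$, and reduce to the case $s = 1$ (rank being $M(\alb)$-invariant). The crux is the identity
\[
(x \times y)\cdot x + x^\#\cdot y = T(x^\#,y)\,1,
\]
obtained by polarizing the classical $x^\#\cdot x = N(x)\,1$; it yields $(x_i\times x_j)\,x_i = 0 = (x_i\times x_j)\,x_j$ whenever $x_i^\# = x_j^\# = 0$. Combined with $1 = 1^\# = \sum_{i<j} x_i\times x_j$, multiplying by $x_\ell$ leaves only one surviving summand, giving $(x_i\times x_j)\,x_\ell = x_\ell$ for $\{i,j,\ell\} = \{1,2,3\}$. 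Multiplying $x_i\times x_j$ by $1 = \sum_k x_k$ then collapses to $x_i\times x_j = x_\ell$; substituting back in, we obtain $x_\ell x_i = 0$ for $i \ne \ell$ and $x_\ell^2 = x_\ell$. Thus the $x_i$ form a complete system of pairwise orthogonal primitive idempotents in $\alb$, and the classical conjugacy of Jordan frames (Jacobson, \cite[Ch.\,IX]{J68}) supplies $\psi \in \Aut(\alb) \subset M_1(\alb)$ with $\psi(x_i) = E_i$; the composition $\psi\circ\varphi$ is the desired isometry.

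For (iv), I would argue by cases on $\rank(x) \in \{0,1,10,27\}$. Ranks $0$ and $1$ force $x^\# = 0$, hence $\rank(x^\#) = 0$. Rank $27$ means $x$ is Jordan-invertible, so $x^\# = N(x)\,x^{-1}$ is also invertible and $\rank(x^\#) = 27$. For rank $10$, Proposition~\ref{prop:orbitsA} yields $\varphi \in M(\alb)$ with $\varphi(E_2+E_3) = x$, and then \eqref{eq:dagger} gives $x^\# = \lambda\varphi^\dagger(E_1)$, of rank $1$ because $M(\alb)$ and rescaling both preserve rank. The inequality $\rank(x^\#) \le \rank(x)$ is then immediate from the four cases. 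I expect the main obstacle to be the combinatorial collapse in (iii): once the polarized adjoint identity is in hand, seeing that it reduces a rank-$1$ triple summing to $1$ to a Jordan frame is the decisive step, and the remaining invocation of frame conjugacy is standard.
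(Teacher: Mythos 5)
Your proof is correct, and for parts (i)--(iii) it takes a genuinely different and in fact tidier route than the paper, while (iv) coincides essentially with the paper's argument. Where the paper proves (i) by contradiction --- reducing to $x+y=1$ via Lemma~\ref{lm:transitivity_M1}, then running a case analysis on $T(x)$ --- you simply polarize the cubic form, $N(x+y)=N(x)+T(x^\#,y)+T(x,y^\#)+N(y)$, and observe that rank $1$ kills every term. For (iii), the paper first invokes (ii) and then carries out another case analysis on $T(x_i)$, reducing to $x_1=\lambda E_1$ and showing $\lambda=1$; you instead polarize the degree-$3$ adjoint identity $x^\#\circ x=N(x)1$ to get $(x\times y)\circ x+x^\#\circ y=T(x^\#,y)1$, and combine it with $1^\#=\sum_{i<j}x_i\times x_j$ to deduce directly that the $x_i$ are pairwise orthogonal idempotents summing to $1$, after which both proofs appeal to the same Jacobson frame-conjugacy theorem. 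Your version is coordinate-free, needs neither (ii) nor any auxiliary norm similarities inside (iii), and makes the structural reason (polarization of the cubic and adjoint identities) more transparent. Two small cosmetic points: in (ii), $x_i+x_j=0$ forces $s=x_k$, not $\pm x_k$ (immaterial, since $N(x_k)=0$ either way); and in (iii) it is worth saying explicitly that each $x_i$, being a rank-$1$ idempotent, is nonzero and primitive, so that the $x_i$ really do form a Jordan frame before you invoke conjugacy.
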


\begin{proof}
i) Assume, to the contrary, that $N(x+y)\ne 0$. By Lemma \ref{lm:transitivity_M1}, applying a norm similarity, we may assume $x+y=1$. We know by Proposition \ref{prop:orbitsA} that $x^2=T(x) x$. If it were $T(x)=0$, applying an automorphism of $\alb$ we would have $x=\iota_1(a)$ with $n(a)=0$, and therefore $N(y)=N(1-\iota_1(a))\neq0$, which contradicts $\rank(y)=1$. Thus $\lambda:=T(x)\ne 0$. Hence, applying an automorphism of $\alb$, we may assume $x=\lambda E_1$, and we still have $x+y=1$. If $\lambda=1$, then $S(y)=1\ne 0$; otherwise $N(y)=1-\lambda\ne 0$. By Proposition~\ref{prop:orbitsA}, in both cases we get a contradiction: $y\notin\cO_1$.

ii) Take $k$ such that $\{i,j,k\}=\{1,2,3\}$. By i), $\rank(x_i+x_j)\neq27$. We cannot have $\rank(x_i+x_j)=0$, because this would imply $x_i+x_j=0$ and $\rank(x_k)=27$. We cannot have $\rank(x_i+x_j)=1$, because this would imply $N(x_i+x_j+x_k)=0$ by i). Therefore, $\rank(x_i+x_j)=10$.

iii) Applying an isometry, we may assume $x_1+x_2+x_3=1$. By ii), we have $\rank(x_i+x_{i+1})=10$. By Proposition~\ref{prop:orbitsA}, we know that $x_i^2=T(x_i)x_i$. If it were $T(x_1)=0$, applying an automorphism of $\alb$ we would have $x_1=\iota_1(a)$ with $n(a)=0$, and therefore $N(x_2+x_3)=N(1-\iota_1(a))\neq0$, which contradicts $\rank(x_2+x_3)=10$. Hence, $T(x_i)\ne 0$ for $i=1,2,3$. Applying an automorphism of $\alb$, we obtain $x_1=\lambda E_1$ where $\lambda=T(x_1)$, and still $x_1+x_2+x_3=1$. If $\lambda\ne 1$, then $N(x_2+x_3)=1-\lambda\ne 0$, which contradicts i). Therefore, $T(x_1)=1$, and similarly $T(x_2)=T(x_3)=1$. We have shown that the $x_i$ are idempotents. Moreover, since $1-x_i=x_{i+1}+x_{i+2}$ is an idempotent, we also have $x_{i+1}x_{i+2}=0$, so the idempotents $x_i$ are orthogonal with $\sum x_i=1$. Now by \cite[Chapter~IX, Theorem~10]{J68}, there exists an automorphism of $\alb$ sending $x_i$ to $E_i$ for $i=1,2,3$.

iv) If $\rank(x)=1$, we already know that $x^\#=0$. It follows from \eqref{eq:dagger} that $\varphi(x)^\#=\varphi^\dagger(x^\#)$ for any isometry $\varphi$. If $\rank(x)=10$, then by Corollary \ref{cor:orbitsA} there is $\varphi\in M_1(\alb)$ such that $\varphi(x)=E_2+E_3$, hence $\varphi^\dagger(x^\#)=\varphi(x)^\#=(E_2+E_3)^\#=E_1$, and so $\rank(x^\#)=1$. If $\rank(x)=27$, then $N(x)\neq0$. Since $N(x^\#)=N(x)^2$ (see \cite{MC69}), we obtain $N(x^\#)\ne 0$ and $\rank(x^\#)=27$.
\end{proof}

\subsection{Proof of Theorem \ref{th:recognitionZ4}}

Suppose $\Gamma:\;\cA=\bigoplus_{g\in\ZZ_4^3}\cA_g$ is a grading such that $\dim\cA_g\le 1$ for all $g\in\ZZ_4^3$. Set $g_0=\deg(s_0)$, so $g_0$ is an element of order $2$.

Denote $W=\eta(\alb)\oplus\eta'(\alb)$. Since $W=\ker(\id+L_{s_0}R_{s_0})$, it is a graded subspace. Hence, for any $g\ne 0,g_0$, we have $\cA_g\subset W$. Also, for any $g\ne g_0$, the component $\cA_g$ consists of symmetric elements.

Let $S_{g_0}=\{g\in\ZZ_4^3\;|\;2g\ne g_0\}$. We claim that $\supp\Gamma=S_{g_0}$. Note that $|S_{g_0}|=56=\dim\cA$, so it is sufficient to prove that $2g=g_0$ implies $\cA_g=0$. Assume, to the contrary, that $0\ne a\in\cA_g$. Then $b=as_0$ is a nonzero element in  $\cA_{-g}$. By Lemma \ref{lm:trace_form_A}, the components $\cA_g$ and $\cA_{-g}$ are in duality with respect to the form $\langle\cdot,\cdot\rangle$, hence $\langle a,b\rangle\ne 0$. But $a=\eta(x)+\eta'(x')$ for some $x,x'\in\alb$, so $b=-\eta(x)+\eta'(x')$, which implies $\langle a,b\rangle=\tr(ab)=T(x,x')-T(x,x')=0$, a contradiction.

Suppose $H$ is a subgroup of $\ZZ_4^3$ isomorphic to $\ZZ_4^2$ and not containing $g_0$. Consider $\cB=\bigoplus_{h\in H}\cA_h$ and $\cD=\cB\oplus s_0\cB$.  Lemma \ref{lm:subalgebra_bis} shows that $\cD$ is a simple structurable algebra of skew-dimension $1$ and dimension $32$. Hence, by \cite[Example 1.9]{Allison90}, $\cD$ is the structurable matrix algebra corresponding to a triple $(T,N,N)$ where either (a) $N$ and $T$ are the generic norm and trace form of a degree $3$ semisimple Jordan algebra $J$, or (b) $N=0$ and $T$ is the generic trace form of the Jordan algebra $J=\cJ(V)$ of a vector space $V$ with a nondegenerate symmetric bilinear form. In case (a), we have by dimension count that either $J=\cH_3(\cQ)$ or $J=\FF\times \cJ(V)$, where $\dim V=13$.
In case (a), as in Subsection \ref{ss:Brown_via_matr}, $\FF\times J$ is a Jordan subalgebra of $\cD$. If $J=\FF\times \cJ(V)$, then  $\cL\bydef\lspan{D_{x,y}\;|\;x,y\in V}$ (the operators $D_{x,y}$ are defined by \eqref{eq:Dxy} in the next section) is a subalgebra of $\Der(\cA,\invol)$ isomorphic to the orthogonal Lie algebra $\frso(V)$. Indeed, the image of $\cL$ in $\End(V)$ is $\frso(V)$, and $\dim\cL\leq \wedge^2V=\dim\frso(V)$. But $\dim\frso(V)=78=\dim\Der(\cA,\invol)$ and $\Der(\cA,\invol)$ is simple of type $E_6$, so we obtain a contradiction.
In case (b), $\cD$ contains the Jordan algebra of a vector space of dimension $15$ (the Jordan algebra $J$ with its generic trace form), hence $\Der(\cA,\invol)$ contains a Lie subalgebra isomorphic to $\frso_{15}(\FF)$, which has dimension larger than $78$, so we again obtain a contradiction.
Therefore, the only possibility is $J=\cH_3(\cQ)$. Then, with the same arguments as for $(\cA,\invol)$, it can be shown that $\Der(\cD,\invol)$ is a simple Lie algebra of type $A_5$, so it has dimension $35$.

By Lemma \ref{lm:subalgebra}, $\cB$ is a semisimple Jordan algebra of degree $\le 4$. Since $\dim\cB=16$, we have the following possibilities: (i) $\cJ(V)$ with $\dim V=15$, (ii) $\FF\times\cJ(V)$ with $\dim V=14$, (iii) $\FF\times\FF\times\cJ(V)$ with $\dim V=13$, (iv) $\cJ(V_1)\times\cJ(V_2)$ with $\dim V_1+\dim V_2=14$ and $\dim V_i\ge 2$, (v) $\FF\times\sym_3(\cQ)$ and (vi) $\Jord{M_4(\FF)}$, where, as before, $\cJ(V)$ denotes the Jordan algebra of a vector space $V$ with a nondegenerate symmetric bilinear form. Cases (ii), (iii) and (v) are impossible, because these algebras do not admit a $\ZZ_4^2$-grading with $1$-dimensional components. Indeed, since $\chr\FF\ne 2$, such a grading would be the eigenspace decomposition with respect to a family of automorphisms, but in each case there is a subalgebra of dimension $2$ whose elements are fixed by all automorphisms. The same argument applies in case (iv) unless $\dim V_1=\dim V_2=7$. On the other hand, cases (i) and (iv) give, as in the previous paragraph, subalgebras of $\Der(\cD,\invol)$ isomorphic to  $\frso(V)$ or $\frso(V_1)\times\frso(V_2)$ of dimension larger than $35$, so these cases are impossible too.
We are left with case (vi), i.e., $\cB\cong\Jord{M_4(\FF)}$. Then, up to equivalence, there is only one $\ZZ_4^2$-grading with $1$-dimensional components, namely, the Pauli grading on the associative algebra $M_4(\FF)$. (For the classification of gradings on simple special Jordan algebras, we refer the reader to \cite[\S 5.6]{EKmon}.)

As a consequence of the above analysis, if $X\ne 0$ is a homogeneous element of $\cA$ whose degree has order $4$ then we have $0\neq X^4\in\FF 1$. Indeed, the degree of $X$ is contained in a subgroup $H$ as above, so $X$ is an invertible matrix in $\cB\cong \Jord{M_4(\FF)}$. Moreover, we can fix homogeneous elements $X_1$, $X_2$ and $X_3$ of $\cB$ such that $X_i^2=1$ and $X_{i}X_{i+1}=X_{i+2}$. We will now show that $\Gamma$ is equivalent to the grading defined by \eqref{df:Diego_grad} in Section \ref{s:MatrixStructAlgebra}. Denote $a_i=\deg(X_i)$, then the subgroup $\langle a_1,a_2\rangle$ is isomorphic to $\ZZ_2^2$ and does not contain $g_0$.

We can write $X_i=\eta(x_i)+\eta'(x'_i)$ with $x,x'\in\alb$. Since $X_i^2=1$, we get $x_i^\#=0=x'^\#_i$ and thus $x_i$ and $x'_i$ have rank 1 (see Remark~\ref{remarkrank1}). Set $Z=X_1+X_2+X_3$ and write $Z=\eta(z)+\eta(z')$ with $z,z'\in\alb$. Then $Z^2=2Z+3$, which implies $z^\#=z'$ and $z'^\#=z$. But, by Lemma \ref{lemmarank1}(iv), $\rank(z^\#)\leq\rank(z)$ and $\rank(z'^\#)\leq\rank(z')$, so we get $\rank(z^\#)=\rank(z)=\rank(z')=\rank(z'^\#)$. Since $Z\ne 0$, we have $z\ne 0$ or $z'\ne 0$, and hence by Lemma \ref{lemmarank1}(iv), we obtain $\rank(z)=27=\rank(z')$.
Then, by Lemma \ref{lemmarank1}(iii), there is an isometry of $\alb$ sending $x_i$ to $\lambda E_i$ ($i=1,2,3$), where $\lambda$ is any element of $\FF$ satisfying $\lambda^3 = N(z)$. Since isometries of $\alb$ extend to automorphisms of $(\cA,\invol)$, we may assume that $x_i=\lambda E_i$. Then $X_i X_{i+1}=X_{i+2}$ implies $x'_i=\lambda^2 E_i$ and hence $\lambda^3=1$. Therefore, $N(z)=1$ and we may take $\lambda=1$, so $x_i=E_i=x'_i$, i.e.,  $X_i=\veps_i\bydef\eta(E_i)+\eta'(E_i)$. Thus, $\veps_i$ and $\veps'_i\bydef\veps_i s_0$ are homogeneous elements; their degrees are precisely the order $2$ elements of $\ZZ_4^3$ different from $g_0$.

Since the subspaces $\ker(L_{\veps_i})=\eta(\iota_{i+1}(\cC)\oplus\iota_{i+2}(\cC))\oplus\eta'(\iota_{i+1}(\cC)\oplus\iota_{i+2}(\cC))$ are graded, so are $\eta(\iota_i(\cC))\oplus\eta'(\iota_i(\cC))$, $i=1,2,3$. For any homogeneous element $X=\eta(\iota_j(x))+\eta'(\iota_j(x'))$, we saw that $0\neq X^4\in\FF1$, which forces $0\neq X^2\in\FF\veps_j\cup\FF\veps'_j$, and this implies  $n(x,x')=0$ and $n(x)=\pm n(x')\ne 0$. These facts will be used several times. Also note that automorphisms of $\cC$ extend to automorphisms of $\alb$ preserving $E_i$, and therefore to automorphisms of $\cA$ preserving $\veps_i$.

Fix homogeneous elements $Y_1=\eta(\iota_1(y_1))+\eta'(\iota_1(y'_1))$ and $Y_2=\eta(\iota_2(y_2))+\eta'(\iota_2(y'_2))$
such that $Y_i^2 \in \FF\veps_i$. Without loss of generality, we may assume $n(y_1) = 1 = n(y_2)$, and therefore $n(y'_1)=1=n(y'_2)$. Also, we have $n(y_i,y'_i)=0$. By \cite[Lemma 5.25]{EKmon}, there exists an automorphism of $\alb$ that fixes $E_i$ and sends $y_1$ and $y_2$ to $1$. Thus we may assume $y_1=1=y_2$ and hence $\overline{y'_i}=-y'_i$. Then $Y_1Y_2=\eta(2\iota_3(y'_1y'_2))+\eta'(2\iota_3(1))$, so we obtain $n(1,y'_1y'_2)=0$, which implies $n(y'_1,y'_2)=0$. Thus the elements $1, y'_1, y'_2$ are orthogonal of norm $1$, and applying an automorphism of $\cC$ (extended to $\cA$) we may assume that $Y_1=\alpha_{1,0}:=\eta(\iota_1(1))+\eta'(\iota_1(x_{a_1}))$ and $Y_2=\alpha_{2,0}:=\eta(\iota_2(1))+\eta'(\iota_2(x_{a_2}))$, as in the grading \eqref{df:Diego_grad}. Consequently, the elements of the form $\alpha_{j,g}$, for $j=1,2,3$ and $g\in\langle a_1,a_2\rangle$, will be homogeneous because they can be expressed in terms of $\alpha_{1,0}$ and $\alpha_{2,0}$.

Fix a new element $Y_3=\eta(\iota_3(y_3))+\eta'(\iota_3(y'_3))$ such that $Y_3^2\in\FF\veps'_3$. As before, we have $n(y_3,y'_3)=0$, but this time $n(y_3)=-n(y'_3)$. Using again that the products of the form $Y_3\alpha_{1,g}$ and $Y_3\alpha_{2,g}$, with $g\in\langle a_1,a_2\rangle$, have orthogonal entries in $\cC$, we deduce that $y_3,y'_3\in\cQ^\perp$, where $\cQ=\lspan{1, x_{a_i}\;|\;i=1,2,3}$, and that $y'_3\in\FF y_3x_{a_3}$. Hence, scaling $Y_3$, we obtain either $Y_3=\alpha'_{3,h}$ or $Y_3=\alpha'_{3,h}s_0$ for some $h\in g_0+\langle a_1,a_2\rangle$. (Actually, applying another automorphism of $\cC$ that fixes the subalgebra $\cQ$ point-wise, we can make $h$ any element we like in the indicated coset.) Replacing $Y_3$ by $Y_3 s_0$ if necessary, we may assume $Y_3=\alpha'_{3,h}$. Since the elements $\alpha_{1,0}$, $\alpha_{2,0}$ and $\alpha'_{3,h}$ determine the $\ZZ_4^3$-grading \eqref{df:Diego_grad}, the proof is complete.

\section{Fine gradings on the exceptional simple Lie algebras $E_6$, $E_7$ and $E_8$}\label{s:E}

Gradings on the exceptional simple Lie algebras are quite often related to gradings on certain nonassociative algebras that coordinatize the Lie algebra in some way. The aim of this section is to indicate how the fine grading by $\ZZ_4^3$ on the split Brown algebra is behind all the fine gradings on the simple Lie algebras of types $E_6$, $E_7$ and $E_8$ mentioned in the introduction. Here we will assume that the ground field $\FF$ is algebraically closed and $\chr\FF\ne 2,3$.

Given a structurable algebra $(\cX,\invol)$, there are several Lie algebras attached to it. To begin with, there is the Lie algebra of derivations $\Der(\cX,\invol)$. For the Brown algebra, this coincides with the Lie algebra of inner derivations $\inder(\cX,\invol)$, which is the linear span of the operators $D_{x,y}$, for $x,y\in\cX$, where
\begin{equation}\label{eq:Dxy}
D_{x,y}(z)=\frac{1}{3}\left[[x,y]+[\bar x,\bar y],z\right]+(z,y,x)-(z,\bar x,\bar y)
\end{equation}
for $x,y,z\in \cX$. (As before, $(x,y,z)$ denotes the associator $(xy)z-x(yz)$.) If $(\cX,\invol)$ is $G$-graded, then $\Der(\cX,\invol)$ is a graded Lie subalgebra of $\End(\cX)$, so we obtain an induced $G$-grading on $\Der(\cX,\invol)$. For the Brown algebra $(\cA,\invol)$,  the Lie algebra of derivations is the simple Lie algebra of type $E_6$. The fine grading by $\ZZ_4 ^3$ on the Brown algebra induces the fine grading by $\ZZ_4^3$ on $E_6$ that appears in \cite{DV_e6}.

Another Lie subalgebra of $\End(\cX)$ is the \emph{structure Lie algebra}
\[
\str(\cX,\invol)=\Der(\cX,\invol)\oplus T_{\cX}
\]
where $T_x\bydef V_{x,1}$, $x\in\cX$. The linear span of the operators  $V_{x,y}$, $x,y\in\cX$, is contained in $\str(\cX,\invol)$ and called the \emph{inner structure Lie algebra} (as it actually equals $\inder(\cX,\invol)\oplus T_{\cX}$).  It turns out (see e.g. \cite[Corollaries 3 and 5]{A78}) that $\str(\cX,\invol)$ is graded by $\ZZ_2$, with $\str(\cX,\invol)\subo=\Der(\cX,\invol)\oplus T_\sks$ and $\str(\cX,\invol)\subuno=T_\sym$, where $\sks=\sks(\cX,\invol)$ and $\sym=\sym(\cX,\invol)$ denote, respectively, the spaces of symmetric and skew-symmetric elements for the involution. If $(\cX,\invol)$ is $G$-graded then we obtain an induced grading by $\ZZ_2\times G$ on $\str(\cX,\invol)$ and on its derived algebra. In the case of the Brown algebra $(\cA,\invol)$, the inner structure Lie algebras coincides with the structure Lie algebra and is the direct sum of a one-dimensional center and the simple Lie algebra of type $E_7$. (The arguments in \cite[Corollary 7]{A79} work here because the Killing form of $E_6$ is nondegenerate.) Therefore, the $\ZZ_4^3$-grading on $(\cA,\invol)$ induces a grading by $\ZZ_2\times\ZZ_4^3$ on the simple Lie algebra of type $E_7$.

Also, the \emph{Kantor Lie algebra} $\kan(\cX,\invol)$ (see \cite{A79}) is the Lie algebra defined on the vector space
\[
\tilde\frn\oplus \str(\cX,\invol)\oplus\frn,
\]
where $\frn=\cX\times \sks$, $\tilde\frn$ is another copy of $\frn$, $\str(\cX,\invol)$ is a subalgebra and
\[
\begin{split}
&[(f,(x,s)]=\bigl(f(x),f^\delta(s)\bigr),\\
&[f,(x,s)\tilde{\ }]=\bigl(f^\varepsilon(x),f^{\varepsilon\delta}(s)\bigr)\tilde{\ },\\
&[(x,r),(y,s)]=(0,x\bar y-y\bar x),\\
&[(x,r)\tilde{\ },(y,s)\tilde{\ }]=(0,x\bar y-y\bar x)\tilde{\ },\\
&[(x,r),(y,s)\tilde{\ }]=-(sx,0)\tilde{\ }+V_{x,y}+L_rL_s+(ry,0),
\end{split}
\]
for any $x,y\in \cX$, $r,s\in\sks$, and $f\in\str(\cX,\invol)$, where $f^\varepsilon\bydef f-T_{f(1)+\overline{f(1)}}$ and $f^\delta\bydef f+R_{\overline{f(1)}}$.

The Kantor Lie algebra $\cL=\kan(\cX,\invol)$ is $5$-graded, i.e., has a grading by $\ZZ$ with support $\{-2,-1,0,1,2\}$:
$
\cL=\cL_{-2}\oplus\cL_{-1}\oplus\cL_0\oplus\cL_1\oplus\cL_2,
$
where $\cL_{-2}=(0\times\sks)\tilde{\ }$, $\cL_{-1}=(\cX\times 0)\tilde{\ }$, $\cL_0=\str(\cX,\invol)$, $\cL_1=\cX\times 0$ and $\cL_2=0\times\sks$.
Any grading on $(\cX,\invol)$ by a group $G$ induces naturally a grading by $\ZZ\times G$ on $\kan(\cX,\invol)$.
For the Brown algebra, $\kan(\cA,\invol)$ is the simple Lie algebra of type $E_8$ (see \cite{A79} and note that, as for $\str(\cA,\invol)$, the arguments are valid in characteristic $\ne 2,3$), and we obtain a grading by $\ZZ\times\ZZ_4^3$ on $E_8$, which is the grading that prompted this study of the $\ZZ_4^3$-gradings on the Brown algebra.

Finally, the \emph{Steinberg unitary Lie algebra} $\stu_3(\cX,\invol)$ (see \cite{AF93}) is defined as the Lie algebra generated by the symbols $u_{ij}(x)$, $1\leq i\ne j\leq 3$, $x\in \cX$, subject to the relations:
\[
\begin{split}
&u_{ij}(x)=u_{ji}(-\bar x),\\
&x\mapsto u_{ij}(x)\ \text{is linear,}\\
&[u_{ij}(x),u_{jk}(y)]=u_{ik}(xy)\ \text{for distinct $i,j,k$.}
\end{split}
\]
Then it is easy to see (\cite[Lemma 1.1]{AF93}) that there is a decomposition
\[
\stu_3(\cX,\invol)=\frs\oplus u_{12}(\cX)\oplus u_{23}(\cX)\oplus u_{31}(\cX),
\]
with $\frs=\sum_{i<j}[u_{ij}(\cX),u_{ij}(\cX)]$, which is a grading of $\stu_3(\cX,\invol)$ by $\ZZ_2^2$. Moreover, any grading by a group $G$ on $(\cX,\invol)$ induces naturally a grading by $\ZZ_2^2\times G$ on $\stu_3(\cX,\invol)$.

An explicit isomorphism can be constructed between the quotient of $\stu_3(\cX,\invol)$ by its center and $\kan(\cX,\invol)$ (see \cite{AF93,EldOku_S4}). If $\chr\FF\ne 2,3,5$, then the Killing form of $E_8$ is nondegenerate, so it has no nontrivial central extensions, hence $\stu_3(\cA,\invol)$ is isomorphic to $\kan(\cA,\invol)$, which is the simple Lie algebra of type $E_8$. Thus we obtain a grading by $\ZZ_2^2\times\ZZ_4^3$ on $E_8$. Actually, a Lie algebra $\cK(\cX,\invol,\cV)=\cV\oplus u_{12}(\cX)\oplus u_{23}(\cX)\oplus u_{31}(\cX)$ is defined in \cite[Section 4]{AF93} assuming $\chr\FF\ne 2,3$. Any grading by $G$ on $(\cX,\invol)$ induces a grading by $\ZZ_2^2\times G$ on $\cK(\cX,\invol,\cV)$. For suitable $\cV$, this Lie algebra is isomorphic to $\kan(\cX,\invol)$ (see \cite{AF93,EldOku_S4}), so we obtain a grading by $\ZZ_2^2\times\ZZ_4^3$ on $E_8$ in any characteristic different from $2,3$.

\section*{Acknowledgments}

The first author would like to thank the Department of Mathematics and Statistics of the Memorial University of Newfoundland for hospitality during his visit in August--November 2013. Reciprocally, the third author would like to thank the Instituto Universitario de Matem\'aticas y Aplicaciones and Departamento de Matem\'aticas of the University of Zaragoza for support and hospitality during his visit in January--April 2013.


\end{document}